\documentclass[11pt]{amsart} 

\usepackage{amsmath, amsthm, amscd, amsfonts, amssymb, enumerate, verbatim, newlfont, calc, graphicx, color}
\usepackage[bookmarksnumbered, colorlinks, plainpages]{hyperref}
\usepackage{tikz}
 \usepackage{doi}

\usepackage{amsmath, amsthm, amscd, amsfonts, amssymb, enumerate,verbatim, newlfont, calc, graphicx, color}
\usepackage[bookmarksnumbered, colorlinks, plainpages]{hyperref}
\usepackage{tikz} 
\usepackage{doi}
\usepackage{cancel}
\newtheorem{theorem}{Theorem}[section] 
 
\newtheorem{fact}[theorem]{Fact}

\newtheorem{lemma}[theorem]{Lemma}
\newtheorem{proposition}[theorem]{Proposition}

\newtheorem{definition}[theorem]{Definition}

\newtheorem{remark}[theorem]{Remark}

\newtheorem{example}[theorem]{Example}

\usepackage{mathtools}

\usepackage{pstricks}
\usepackage{epsfig}
\usepackage{pst-grad}
\usepackage{pst-plot}
\usepackage{subfig}

 \usepackage{academicons}

\begin{document}

\author[M. Nasernejad   and   J. Toledo]{Mehrdad  Nasernejad$^{1,2,*}$ and   Jonathan Toledo$^{3}$}
\title[Strong persistence index and fluctuations in colon powers]{Strong persistence index and fluctuations in colon powers of monomial ideals}
\subjclass[2010]{13B25, 13C15,  13F20, 13E05.} 
\keywords {Monomial Ideals, Associated primes, Strong persistence index, Fluctuation in colon powers.}

\thanks{$^*$Corresponding author}

%\thanks{Mehrdad Nasernejad  }

\thanks{E-mail addresses:  m$\_$nasernejad@yahoo.com  and  jonathan.toledo@infotec.mx}  
\maketitle

\begin{center}
{\it
$^{1}$Univ. Artois, UR 2462, Laboratoire de Math\'{e}matique de  Lens (LML), \\  F-62300 Lens, France \\ 
$^{2}$Universit\'e  Caen Normandie, ENSICAEN, CNRS, Normandie Univ, GREYC UMR  6072, F-14000 Caen,  France\\
$^{3}$INFOTEC Centro de investigaci\'{o}n e innovaci\'{o}n en informaci\'{o}n \\
 y comunicaci\'{o}n, Ciudad de M\'{e}xico,14050, M\'{e}xico
}
\end{center}

\vspace{0.4cm}

\begin{abstract}
Let $I$ be an ideal in  a commutative  Noetherian ring $R$. We say that a positive integer  $\ell_0$ is the \textit{strong persistence index} of $I$ if $\ell_0$ is the smallest integer such that $(I^{\ell+1}:_RI)=I^{\ell}$ for all  $\ell\geq \ell_0$. The first aim of this paper is to study this notion for monomial ideals. 
We also  introduce the notion of  \textit{fluctuation in colon powers}
 if there exist  positive integers $a<b<c$ such that at least one of the following cases occurs:
\begin{itemize}
\item[(i)]  $(I^{a}:I) = I^{a-1}, \quad (I^{b}:I) \neq I^{b-1}, \quad \text{but} \quad (I^{c}:I) = I^{c-1}.$
\item[(ii)] $(I^{a}:I) \neq I^{a-1}, \quad (I^{b}:I) = I^{b-1}, \quad \text{but} \quad (I^{c}:I) \neq I^{c-1}.$
\end{itemize}
The second purpose  of this work is to study  this phenomenon for monomial ideals.  
 \end{abstract}
\vspace{0.4cm}

%%%%%%%%%%%%%%%%%%%%%%%%%%%%%%%%%%%%%%%%%%%%%%%%%%%%%%%%%%%%
%%%%%%%%%%%%%%%%%%%%%%%%%%%%%%%%%%%%%%%%%%%%%%%%%%%%%%%%%%%%
%%%%%%%%%%%%%%%%%%%%%%%%%%%%%%%%%%%%%%%%%%%%%%%%%%%%%%%%%%%%
%%%%%%%%%%%%%%%%%%%%%%%%%%%%%%%%%%%%%%%%%%%%%%%%%%%%%%%%%%%%
%%%%%%%%%%%%%%%%%%%%%%%%%%%%%%%%%%%%%%%%%%%%%%%%%%%%%%%%%%%%
%%%%%%%%%%%%%%%%%%%%%%%%%%%%%%%%%%%%%%%%%%%%%%%%%%%%%%%%%%%%

\section{Introduction and Overview}

Let $I$ be an ideal in a commutative  Noetherian ring $R$. Then a prime ideal $\mathfrak{p}\subset  R$ is an {\it associated prime} of $I$ if there exists
 an element $u$ in $R$ such that $\mathfrak{p}=(I:_R u)$, where $(I:_R u)=\{r\in R \mid  ru\in I\}$. The  {\it set of associated primes} of $I$, denoted by  $\mathrm{Ass}_R(R/I)$, is the set of all prime ideals associated to  $I$. In  \cite{BR}, Brodmann proved  that the sequence $\{\mathrm{Ass}_R(R/I^n)\}_{n \geq 1}$ of associated prime ideals is stationary  for large $n$, i.e., there exists a positive integer $n_0$ such that $\mathrm{Ass}_R(R/I^n)=\mathrm{Ass}_R(R/I^{n_0})$ for all $n\geq n_0$. The  minimal such $n_0$ is called the {\it index of stability}   of  $I$ and $\mathrm{Ass}_R(R/I^{n_0})$ is called the {\it stable set } 
 of associated prime ideals of  $I$, which is denoted by $\mathrm{Ass}^{\infty }(I).$

 Section \ref{Main Section 1} is concerned with the study of the notion of the strong persistence index of an ideal.
 Assume that $I$ is an ideal in   a commutative  Noetherian ring $R$. We say that a positive integer  $\ell_0$ is the \textit{strong persistence index} of $I$ if $\ell_0$ is the smallest integer such that $(I^{\ell+1}:_RI)=I^{\ell}$ for all  $\ell\geq \ell_0$.  The  ideal $I$ is said to have the  {\it strong persistence property} if  $\ell_0=1$.  The concept    of the strong persistence property of ideals has been investigated  over the last decade, for more details and information consult  \cite{ANR2, BNT, KNT, N2, NKA, RNA, RT}.
 In this section, by means of Example \ref{Ex.Ass.PandN} and  Lemmata  \ref{Lem.SPP.Index} and \ref{Mat-Local},  we show that  there exist infinitely many  monomial ideals  in   $R=K[x_1, \ldots, x_n]$, where $n\geq 3$,   that  have  the  same  strong persistence index $\ell_0\geq 2$, cf.   Theorem \ref{Th.SPP.Index}.  
 We conclude this section with some results on the strong persistence index of ideals in commutative Noetherian rings.
 
 Let \( I \) be a monomial ideal in the polynomial ring  \( R = K[x_1, \ldots, x_n] \). Suppose that \( n_0 \) denotes the 
index of stability of \( I \), and \( \ell_0 \) denotes the strong persistence index of \( I \). It is natural to ask whether these 
indices can be compared. In Remark \ref{Compare} we present two examples showing that, in the first example, \( n_0 < \ell_0 \), while in the 
second example, \( n_0 > \ell_0 \). In other words, these two indices are not comparable in general.

  Section   \ref{Main Section 2} is devoted to exploring the second topic of our study. We say that an ideal $I$  in a commutative  Noetherian $R$ 
  has the  phenomenon  of \textit{fluctuation in colon powers}
 if there exist  positive integers $a<b<c$ such that at least one of the following cases occurs:
\begin{itemize}
\item[(i)]  $(I^{a}:I) = I^{a-1}, \quad (I^{b}:I) \neq I^{b-1}, \quad \text{but} \quad (I^{c}:I) = I^{c-1}.$
\item[(ii)] $(I^{a}:I) \neq I^{a-1}, \quad (I^{b}:I) = I^{b-1}, \quad \text{but} \quad (I^{c}:I) \neq I^{c-1}.$
\end{itemize}
    A natural question is whether there exist infinitely many monomial ideals in $R=K[x_1, \ldots, x_n]$, where $n\geq 2$,  whose colon powers exhibit fluctuation. This question is answered in the affirmative in Theorem \ref{Th.Fluctuation}.  
  
  \bigskip
Throughout this paper,  $\mathcal{G}(I)$ denotes the unique minimal set of monomial generators of a monomial ideal $I\subset R=K[x_1, \ldots, x_n]$, where 
$R=K[x_1, \ldots, x_n]$ is  a polynomial  ring over a field $K$. Furthermore, the {\em support} of a monomial $u\in R$, denoted by $\mathrm{supp}(u)$, is the set of variables that divide $u$.  Moreover, for a monomial ideal $I$, we set $\mathrm{supp}(I)=\bigcup_{u \in \mathcal{G}(I)}\mathrm{supp}(u)$.

%%%%%%%%%%%%%%%%%%%%%%%%%%%%%%%%%%%%%%%%%%%%%%%%%%%%%%%%%%%%
%%%%%%%%%%%%%%%%%%%%%%%%%%%%%%%%%%%%%%%%%%%%%%%%%%%%%%%%%%%%
%%%%%%%%%%%%%%%%%%%%%%%%%%%%%%%%%%%%%%%%%%%%%%%%%%%%%%%%%%%%
%%%%%%%%%%%%%%%%%%%%%%%%%%%%%%%%%%%%%%%%%%%%%%%%%%%%%%%%%%%%

\section{On the strong persistence index  of monomial ideals} \label{Main Section 1}

 First, recall that for an ideal $I$  in  a commutative  Noetherian ring $R$, we  say that a positive integer  $\ell_0$ is the \textit{strong persistence index} of $I$ if $\ell_0$ is the smallest integer such that $(I^{\ell+1}:_RI)=I^{\ell}$ for all  $\ell\geq \ell_0$. Suppose now that $R=K[x_1, \ldots, x_n]$ is a polynomial ring 
 over  a field $K$.  It is natural to ask whether there exists a monomial ideal $I\subset R$ with $\mathrm{supp}(I)=\{x_1, \ldots, x_n\}$  that has the 
 strong persistence index $\ell_{0}$. We provide an affirmative  answer to this question in Theorem \ref{Th.SPP.Index}.  To  this  end, we begin  with 
  Lemma \ref{Lem.Ass.PandN} and     Example \ref{Ex.Ass.PandN}. Next,   we focus on   Lemmata  \ref{Lem.SPP.Index} and \ref{Mat-Local}.

 %%%%%%%%%%%%%%%%%%%%%%%%%%%%%%%%%%%%%%%%%%%%%%%%%%%%%%%%%%
 
  Before establishing Lemma \ref{Lem.Ass.PandN}, we recall the following proposition.
 
 \begin{proposition}\label{Pro.supp} (\cite[Proposition 4.2]{NKRT})
Let  $I$ be   a monomial ideal in $R=K[x_1, \ldots, x_n]$ over a field $K$ with $\mathcal{G}(I)=\{u_1, \ldots, u_m\}$ and $\mathrm{Ass}_R(R/I)=\{\mathfrak{p}_1, \ldots, \mathfrak{p}_s\}$. Then, the following statements hold. 
\begin{itemize}
\item[(i)] If $x_i|u_t$ for some $i$ with  $1\leq i \leq n$, and  
for some $t$ with  $1\leq t \leq m$, then there exists $j$ with  $1\leq j \leq s,$ 
such that $x_i\in \mathfrak{p}_j$. 
\item[(ii)] If $x_i\in \mathfrak{p}_j$ for some $i$ with  $1\leq i \leq n$, and  for some $j$ with   $1\leq j \leq s$, then  there exists $t$ with  
 $1\leq t \leq m$, such that  $x_i|u_t$. 
\end{itemize} 
Especially, $\bigcup_{j=1}^s \mathrm{supp}(\mathfrak{p}_j)=\bigcup_{t=1}^m \mathrm{supp}(u_t)$.
\end{proposition}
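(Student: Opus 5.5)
The plan is to use primary decomposition of monomial ideals. Every monomial ideal $I$ has an irredundant primary decomposition $I=Q_1\cap\cdots\cap Q_s$, where each $Q_j$ is a monomial ideal generated by pure powers of variables. The corresponding associated primes are $\mathfrak{p}_j=\sqrt{Q_j}$, and each is generated by exactly the variables appearing in the generators of $Q_j$. So $\mathrm{supp}(\mathfrak{p}_j)=\mathrm{supp}(Q_j)$. This translates both statements into comparisons of supports, and the final equality $\bigcup_j \mathrm{supp}(\mathfrak{p}_j)=\bigcup_t \mathrm{supp}(u_t)$ follows at once from (i) and (ii).

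For (i), suppose $x_i\mid u_t$ but $x_i\notin\mathfrak{p}_j$ for every $j$. Then no $Q_j$ involves $x_i$. Since each $Q_j$ is generated by pure powers of other variables, membership of a monomial in $Q_j$ does not depend on its $x_i$-exponent. Write $u_t=x_i^a v$ with $a\ge1$ and $x_i\nmid v$. Because $u_t\in Q_j$ for all $j$, we get $v\in Q_j$ for all $j$, hence $v\in I$. But $v$ properly divides $u_t$, which contradicts the minimality of $\mathcal{G}(I)$.

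For (ii), suppose $x_i\in\mathfrak{p}_j$ and that no $u_t$ is divisible by $x_i$. Then $I$ is generated by monomials in the remaining variables, so $x_i$ is a nonzerodivisor on $R/I$. Concretely, $(I:x_i)=I$: if a monomial $w$ satisfies $x_iw\in I$, then some $u_t$ divides $x_iw$, and since $x_i\nmid u_t$ we get $u_t\mid w$. By the associated-prime characterization in the introduction, $\mathfrak{p}_j=(I:u)$ for some $u$, and we may take $u$ to be a monomial because $I$ is monomial. Then $x_iu\in I$ gives $u\in(I:x_i)=I$, so $\mathfrak{p}_j=R$, a contradiction. This argument avoids primary decomposition entirely.

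The main obstacle is the step in (i) asserting that membership in $Q_j$ is independent of the $x_i$-exponent. To handle it cleanly I would instead argue through colon ideals, mirroring (ii). Every monomial $m$ with $x_i\mid m$, $m\in I$, has some associated prime of $I$ witnessing the failure of $m/x_i\in I$. Specifically, if $u_t/x_i\notin I$, then $(I:u_t/x_i)$ is a proper ideal containing $x_i$. Any prime minimal over it is associated to $I$, since it is an associated prime of $R/(I:u_t/x_i)$, which embeds in $R/I$, and that prime contains $x_i$. Minimality of $u_t$ guarantees $u_t/x_i\notin I$, so this version of (i) is rigorous.
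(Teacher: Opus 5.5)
The paper does not prove this proposition; it imports it from \cite[Proposition 4.2]{NKRT}, so there is no in-paper argument to compare with. Your proof is correct.

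Its core, in your third and fourth paragraphs, is one observation applied in both directions. Namely, $x_i$ is a zero-divisor on $R/I$ if and only if $x_i$ divides some $u_t$:
\begin{itemize}
\item if $x_i\mid u_t$, then $x_i\cdot\overline{u_t/x_i}=0$, and $u_t/x_i\notin I$ by minimality of $\mathcal{G}(I)$;
\item if $x_i$ divides no $u_t$, then $(I:x_i)=I$.
\end{itemize}
Combine this with the standard fact that the zero-divisors on $R/I$ are exactly $\bigcup_j\mathfrak{p}_j$. Your step through a prime minimal over $(I:u_t/x_i)$ and the embedding $R/(I:u_t/x_i)\hookrightarrow R/I$ simply proves that fact in the case you need. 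Likewise, in (ii) you need not take $u$ to be a monomial: $(I:x_i)=I$ holds as ideals, so $x_iu\in I$ forces $u\in I$ for any $u$ with $(I:u)=\mathfrak{p}_j$.

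Three smaller remarks.

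First, your opening claim is inaccurate as stated. Primary components of a monomial ideal need not be generated by pure powers; for example, $(x^2,xy,y^2)$ is $(x,y)$-primary. So you cannot have $s$ components, one per $\mathfrak{p}_j$, all generated by pure powers. What is true is that $I$ has an irredundant decomposition into irreducible monomial ideals (generated by pure powers). Their radicals are exactly the associated primes, possibly with repetitions.

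Second, with that correction your second-paragraph argument for (i) is already rigorous. The step you flagged as the main obstacle is immediate: whether a monomial lies in $(x_{j_1}^{a_1},\ldots,x_{j_r}^{a_r})$ depends only on its exponents in $x_{j_1},\ldots,x_{j_r}$.

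Third, the concluding equality of supports needs each $\mathfrak{p}_j$ to be generated by variables, so that $x_i\in\mathfrak{p}_j$ if and only if $x_i\in\mathrm{supp}(\mathfrak{p}_j)$. Your colon arguments do not give this. You should therefore cite explicitly the standard fact that associated primes of monomial ideals are monomial primes.
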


 %%%%%%%%%%%%%%%%%%%%%%%%%%%%%%%%%%%%%%%%%%%%%%%%%%%%%%%%%%

 \begin{lemma} \label{Lem.Ass.PandN}
Let $I\subset R=K[x,y,z]$ be a monomial ideal, $\mathfrak{m}=(x,y,z)$, and $I=Q \cap L$ be a minimal primary decomposition of $I$ such that 
$\sqrt{Q} \in \{(x,y), (x,z), (y,z)\}$ and  $\sqrt{L}=\mathfrak{m}$. Also, let  $\mathfrak{m}\notin \mathrm{Ass}(R/I^t)$ for some $t\geq 2$.  Then $\mathfrak{m}\notin \mathrm{Ass}(R/I^\alpha)$ for all $\alpha \geq t$. 
\end{lemma}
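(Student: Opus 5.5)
By symmetry of the hypotheses in $x,y,z$, I would assume $\sqrt{Q}=(x,y)$. The first step is to translate the condition $\mathfrak{m}\notin \mathrm{Ass}(R/I^\alpha)$ into a concrete equality of ideals.

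Since $Q$ is a monomial $(x,y)$-primary ideal, its minimal generators involve only $x$ and $y$. Hence $Q=Q_0R$ with $Q_0$ an $(x,y)$-primary ideal of $K[x,y]$. Then every power $Q^\alpha=Q_0^\alpha R$ is again $(x,y)$-primary: $Q_0^\alpha$ is primary to the maximal ideal of $K[x,y]$, and extending to $K[x,y][z]$ preserves primary-ness. Because $\sqrt{L}=\mathfrak{m}$, some power $z^N$ lies in $L$, so $IR_z=QR_z$. Every associated prime of $I^\alpha$ is a monomial prime containing the unique minimal prime $(x,y)$ of $I$, which leaves only $(x,y)$ and $\mathfrak{m}$; Proposition \ref{Pro.supp} confirms that no other variables can appear. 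Consequently $\mathfrak{m}\notin\mathrm{Ass}(R/I^\alpha)$ holds if and only if $z$ is a nonzerodivisor on $R/I^\alpha$. This is equivalent to $I^\alpha=I^\alpha R_z\cap R=Q^\alpha R_z\cap R=Q^\alpha$. The lemma is therefore reduced to the following claim: if $I^t=Q^t$ for some $t\ge 2$, then $I^\alpha=Q^\alpha$ for all $\alpha\ge t$. The inclusion $I^\alpha\subseteq Q^\alpha$ is automatic, so only $Q^\alpha\subseteq I^\alpha$ needs proof.

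For the reverse inclusion I would argue by induction on $\alpha$, and the natural first move is the factorization $Q^{\alpha+1}=Q^{t}\,Q^{\alpha+1-t}=I^t\,Q^{\alpha+1-t}$. When $\alpha+1-t\ge t$, the induction hypothesis gives $Q^{\alpha+1-t}=I^{\alpha+1-t}$, and the inclusion follows. This already settles every $\alpha\ge 2t$ once the range $t\le\alpha<2t$ is known, so the real content is the single step $Q^{t+1}\subseteq I^{t+1}$, together with its analogues in that bounded range.

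For this step I would work with monomials. Take a monomial generator $w=u_1\cdots u_{t+1}$ of $Q^{t+1}$, with each $u_i\in\mathcal{G}(Q)$ a monomial in $x,y$. The task is to rewrite $w$ as a product of $t+1$ monomials of $I$. I would use $I^t=Q^t$ in the form that $u_1\cdots u_t$ is a product $v_1\cdots v_t$ of elements of $I$, and likewise for $u_2\cdots u_{t+1}$. The aim is then to trade a $z$-free factor against the $z$-free monomials $u_i$ so as to absorb the extra $u_{t+1}$. The hypothesis $t\ge 2$ should be what permits this exchange between overlapping products, since for $t=1$ the statement $I=Q$ is much stronger. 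A second option, if the direct exchange stalls, is the saturation description $Q^{t+1}=I^{t+1}:z^\infty$ combined with the exact sequence relating $I^{t}/I^{t+1}$ to $z$-torsion, to show that the $z$-torsion of $R/I^{t+1}$ vanishes. I expect this passage from $t$ to $t+1$, in particular exploiting $t\ge2$ in a combinatorial exchange, to be the main obstacle. The reduction to $I^\alpha=Q^\alpha$ and the induction beyond $2t$ are routine.
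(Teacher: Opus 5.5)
Your reduction is correct: since $z^N\in L$ and every power of the monomial $(x,y)$-primary ideal $Q$ is again $(x,y)$-primary, the only possible associated primes of $I^\alpha$ are $(x,y)$ and $\mathfrak{m}$. So $\mathfrak{m}\notin\mathrm{Ass}(R/I^\alpha)$ holds exactly when $I^\alpha=I^\alpha:z^\infty=Q^\alpha$. (That $Q$ is monomial needs a word: it is the isolated component $I:z^\infty$. This is minor.)

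The gap is that you stop at the only substantive step: you never prove $Q^{t+1}\subseteq I^{t+1}$. The ``exchange'' is stated only as an aim, and the alternative via saturation and an exact sequence is not carried out, so as written the proposal does not establish the lemma. Moreover, the move you describe leaves a factor stranded. You apply $Q^t=I^t$ to $u_1\cdots u_t$ and then try to absorb $u_{t+1}$, but $u_{t+1}$ lies in $Q$ and need not lie in $I$. The missing idea is to regroup that leftover $Q$-factor with $t-1$ of the new $I$-factors, which lie in $Q$.

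Once you see this, the step is one line, and it holds for arbitrary ideals $I\subseteq Q$ and every $t\ge 1$:
\[
Q^{t+1}=Q\cdot Q^{t}=Q\cdot I^{t}=(Q\cdot I^{t-1})\cdot I\subseteq Q^{t}\cdot I=I^{t}\cdot I=I^{t+1}.
\]
Hence $I^t=Q^t$ forces $I^{t+1}=Q^{t+1}$, and ordinary induction covers all $\alpha\ge t$. Your detour through $\alpha\ge 2t$ and the ``bounded range'' is unnecessary. The hypothesis $t\ge 2$ also plays no role: it only excludes $t=1$, which is impossible anyway since $\mathfrak{m}\in\mathrm{Ass}(R/I)$.

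This is precisely the exchange the paper performs on generators. It takes $f=uv$ with $u\in\mathcal{G}(I)$ and $v=g_1\cdots g_t\in\mathcal{G}(I^t)$, where $v$ is $z$-free by Proposition \ref{Pro.supp}. It puts $u$ in place of $g_1$ and notes that $ug_2\cdots g_t\in I^t$ is divisible by a $z$-free product $m_1\cdots m_t$ of generators. Multiplying back by $g_1$ shows that $\mathcal{G}(I^{t+1})$ is $z$-free, and Proposition \ref{Pro.supp} then excludes $\mathfrak{m}$.

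With the displayed line added, your route is complete. It is arguably more transparent than the paper's, since it works with ideals rather than generators and isolates the general fact behind the lemma.
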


\begin{proof}
It is enough to show that $\mathfrak{m}\notin \mathrm{Ass}(R/I^{t+1})$. Suppose, on the contrary, that $\mathfrak{m}\in \mathrm{Ass}(R/I^{t+1})$. 
Without loss of generality, assume that $\sqrt{Q}=(x,y)$.  According to  $\mathrm{Min}(I)=\{\sqrt{Q}\}$,  we can deduce that   $\mathrm{Ass}(R/I^{t+1})=\{\sqrt{Q}, \mathfrak{m}\}$. It is well-known that  $I^{t+1}$ is generated by 
$$\{uv \mid u\in \mathcal{G}(I) \text { and } v\in \mathcal{G}(I^t)\}.$$ 
 Let $f=uv$ with $u\in \mathcal{G}(I)$ and $v\in \mathcal{G}(I^t)$.  In what follows, we  prove that there exists $f_1\in I^{t+1}$ 
 with  $z\nmid f_1$ such that $f_1\mid f$.  Since  $\mathfrak{m}\notin \mathrm{Ass}(R/I^t)$ and  $\mathrm{Min}(I)=\{\sqrt{Q}\}$,  it follows that 
 $\mathrm{Ass}(R/I^{t})=\{\sqrt{Q}\}$, and by virtue of  $z\notin (x,y)$, we can conclude from Proposition \ref{Pro.supp} that $z\nmid v$. If $z\nmid u$, then  $z\nmid uv$, and so the argument is done. Hence, let $z\mid u$. Since $v\in \mathcal{G}(I^t)$,  hence there exist $g_1, \ldots, g_t \in \mathcal{G}(I)$ such that $v=g_1 \cdots g_t$. As  $ug_2 \cdots g_t \in I^t$,  we derive that there exist $m_1, \ldots, m_t \in \mathcal{G}(I)$ such that $z\nmid m_1 \cdots m_t$ and $m_1 \cdots m_t \mid ug_2 \cdots g_t$.  Set $f_1:=m_1 \cdots m_t g_1$. On account of   $f_1\mid ug_1 g_2 \cdots g_t$, $f_1 \in I^{t+1}$,   and $z\nmid f_1$, this finishes the proof. 
\end{proof}

%%%%%%%%%%%%%%%%%%%%%%%%%%%%%%%%%%%%%%%%%%%%%%%%%%%%%%%%%%%
%%%%%%%%%%%%%%%%%%%%%%%%%%%%%%%%%%%%%%%%%%%%%%%%%%%%%%%%%%%

As an application of  Lemma  \ref{Lem.Ass.PandN},  we present the following example. Notably, the construction of this example draws from \cite[Theorem 4.1]{HH2}.

\begin{example} \label{Ex.Ass.PandN}
\em{
Assume that  $I=(x^{m+3}, y^{m+3}, x^{m+2}y, xy^{m+2}, x^{m+1}y^{2}z^r)$ with $m, r\geq 1$  is  a monomial ideal in  $R=K[x,y,z]$ and 
$\mathfrak{m}=(x,y,z)$.   We show  that 
\[
\mathrm{Ass}(I^s) =
    \begin{dcases}
    \{(x,y), \mathfrak{m}\} & \text{if } 1\leq s\leq m\\
\{(x,y)\}  & \text{if } s\geq m+1. \\
                \end{dcases}
\]
We first note that  $$I=(x, y^{m+3}) \cap (x^{m+3}, y) \cap (x^{m+1}, y^{m+2}) \cap (x^{m+2}, y^2) \cap (x^{m+2}, y^{m+2}, z^r).$$
This yields that $\mathrm{Ass}(I)=\{(x,y), (x,y,z)\}$. In view of  $\mathrm{Min}(I)=\{(x,y)\}$, 
we get  $ \{(x,y)\} \subseteq   \mathrm{Ass}(I^t) \subseteq \{(x,y), \mathfrak{m}\}$ for all $t\geq 1$.

 In what follows, we assume  $1\leq s\leq m$, and our aim is to show that $\mathfrak{m} \in \mathrm{Ass}(I^s)$. To  do this, put $h:=x^{m+1}y^{s(m+2)-1}z^{r-1}$. Since $s\leq m$,  we obtain the following 
  \begin{enumerate}
  \item $xh= x^{m+2}y^{s(m+2)-1}z^{r-1}$. Due to   $(x^{m+2}y)(y^{m+3})^{s-1}\mid xh$, this leads to  $xh\in I^s$. 
  \item $yh=x^{m+1}y^{s(m+2)}z^{r-1}$. Because  $(xy^{m+2})^s \mid yh$, we have  $yh\in I^s$. 
  \item  $zh=x^{m+1}y^{s(m+2)-1}z^r$. Since $(y^{m+3})^{s-1}(x^{m+1}y^2z^r) \mid zh$, this gives that  $zh\in I^s$. 
  \end{enumerate}

Consequently, we get $(x,y,z) \subseteq (I^s:h)$ for all $1\leq s \leq m$. Here, we show that $h\notin I^s$. On the contrary, assume that $h\in I^s$. Hence, there exist nonnegative integers $a,b,c, d,$ and $e$ with $a+b+c+d+e=s$ such that 
$$(x^{m+3})^a (y^{m+3})^b (x^{m+2}y)^c (xy^{m+2})^d (x^{m+1}y^2z^r)^e \mid h.$$
This gives the following conditions:
\begin{itemize}
\item[(i)] $(m+3)a+ (m+2)c + d + (m+1)e \leq m+1$;
\item[(ii)] $(m+3)b+c+(m+2)d+2e \leq s(m+2)-1$;
\item[(iii)] $re \leq r-1$;
\item[(iv)] $a+b+c+d+e=s$, where $a,b,c,d,e \geq 0$. 
\end{itemize}
It follows from (iii) that $e=0$, and by (iv), we get $a+b+c+d=s$. If $a\geq 1$ or $c\geq 1$, this contradicts (i). Therefore, we must have $a=0$ and $c=0$. 
Since $a+b+c+d=s$, we gain $d=s-b$. Now, the left hand side of (ii) is $ms+2s+b$, while the right hand side of (ii) is $ms+2s-1$. Because $b \geq 0$, this 
leads to a contradiction. Accordingly, $h\notin I^s$, and so   $\mathfrak{m} \in \mathrm{Ass}(I^s)$  for all $1\leq s \leq m$. 

To complete our argument, we need to verify that $\mathfrak{m} \notin \mathrm{Ass}(I^s)$  for all $s\geq m+1$. From now on, our strategy is to rely on  
Lemma \ref{Lem.Ass.PandN}. For this purpose, it is sufficient for us  to show $\mathfrak{m} \notin \mathrm{Ass}(I^{m+1})$. 
We claim  if $v\in I^{m+1}$, then there exists  $f\in I^{m+1}$ with  $z \nmid f$ such that $f\mid v$.  Pick 
\begin{equation}
v=(x^{m+3})^a (y^{m+3})^b (x^{m+2}y)^c (xy^{m+2})^d (x^{m+1}y^2z^r)^e \in I^{m+1}, \tag{$\star$}
\end{equation}
  with $a\geq 0,$ $b\geq 0,$  $c\geq 0,$ 
$d \geq0$, $e\geq 1$, and $a+b+c+d+e=m+1$. Because  $(x^{m+1}y^2z^r)^{m+1}=(x^{m+2}y)^m (xy^{m+2})z^{r(m+1)}$, 
this permits us to assume that  $1\leq e \leq m$.  We first show  the claim holds when $e=1$. Let $m=1$. 
 Then   we have $I=(x^4, y^4, x^3y, xy^3, x^2y^2z^r)$, and we may meet  the following cases:
  \begin{itemize}
\item[$\bullet$] $y^4(x^3y) \mid (xy^3)(x^2y^2z^r)$, where $y^4(x^3y)\in \mathcal{G}(I^2)$.
\item[$\bullet$]  $x^4(xy^3) \mid  (x^3y)(x^2y^2z^r)$, where $x^4(xy^3) \in \mathcal{G}(I^2)$.
\item[$\bullet$]  $(xy^3)^2 \mid y^4(x^2y^2z^r)$, where $(xy^3)^2 \in \mathcal{G}(I^2)$.
\item[$\bullet$]   $(x^3y)^2 \mid x^4(x^2y^2z^r)$, where $(x^3y)^2\in \mathcal{G}(I^2)$.
\end{itemize}
 Now, let $m\geq 2$.   One can easily see the following statements:

  \begin{itemize}
\item[(1)]  $x^{m+3}(x^{m+2}y)^2 \mid (x^{m+3})^{2} (x^{m+1}y^2z^r)$. 
\item[(2)]  $y^{m+3}(x^{m+2}y)^2 \mid x^{m+3} y^{m+3} (x^{m+1}y^2z^r)$. 
\item[(3)]   $x^{m+3} (y^{m+3})^2 \mid (xy^{m+2})^2(x^{m+1}y^2z^r)$.
\item[(4)]   $(x^{m+2}y)^3 \mid x^{m+3}(x^{m+2}y)(x^{m+1}y^2z^r)$.
\item[(5)]  $x^{m+3} y^{m+3}(x^{m+2}y)  \mid  x^{m+3}(xy^{m+2})(x^{m+1}y^2z^r)$.
\item[(6)]  $y^{m+3}(x^{m+2}y)^2 \mid (x^{m+2}y)(xy^{m+2})(x^{m+1}y^2z^r)$.
\item[(7)]  $(y^{m+3})^2 (x^{m+2}y) \mid y^{m+3}(xy^{m+2})(x^{m+1}y^2z^r)$.
\item[(8)]  $(x^{m+3})^{m-\alpha} (xy^{m+2})^{\alpha +1} \mid (y^{m+3})^{\alpha} (x^{m+2}y)^{m-\alpha} (x^{m+1}y^2z^r)$,  where \\
  $0\leq \alpha \leq m$. 
\end{itemize}
 By considering the statements above,  we can  deduce that the claim is true when $e=1$ and $m\geq 2$. 
 Now, we use induction on $m$. Let $m=1$. Then $e=1$.  The argument above states that our claim holds in case $m=1$.  Now, suppose, inductively, that $m\geq 2$ and that the result has been proved for all values  less than $m$. Write $v=v_1(x^{m+1}y^2z^r)$. It follows from ($\star$) that 
  $v_1 \in I^m$, and so  we can conclude from  the inductive hypothesis that there exists a monomial $f_1\in I^{m}$ with  $z \nmid f_1$ such that $f_1\mid v_1$.    Furthermore,  in view of  the  statements (1)--(8), we can derive that  there exists a monomial $f_2\in I^{m+1}$ with  $z \nmid f_2$ such that $f_2 \mid f_1(x^{m+1}y^2z^r)$, and hence $f_2 \mid v$.   This completes the inductive step, and thus the claim  is  proved by induction.  
  
  Since $\mathfrak{m} \notin \mathrm{Ass}(I^{m+1})$,  based on  Lemma \ref{Lem.Ass.PandN}, this gives rise to   
  $\mathfrak{m} \notin \mathrm{Ass}(I^s)$  for all $s\geq m+1$, as required. 
}
\end{example}

 %%%%%%%%%%%%%%%%%%%%%%%%%%%%%%%%%%%%%%%%%%%%%%%%%%%%%%%%%
 %%%%%%%%%%%%%%%%%%%%%%%%%%%%%%%%%%%%%%%%%%%%%%%%%%%%%%%%%
 
 To understand  Lemma \ref{Lem.SPP.Index}, we need to review the definitions of expansion and weighting operations. 
 We begin by recalling   the definition of the expansion operation  on monomial ideals from  \cite{BH}. \par 
Suppose  $R = K[x_1, \ldots , x_n]$ is  the polynomial ring over a field $K$ in the variables $x_1, \ldots , x_n$. Fix an ordered $n$-tuple $(i_1, \ldots , i_n)$ of positive integers, and take the polynomial ring $R^{(i_1,\ldots ,i_n)}$ over $K$ in the variables $$x_{11}, \ldots , x_{1i_1} , x_{21}, \ldots , x_{2i_2} , \ldots , x_{n1}, \ldots , x_{ni_n}.$$
Assume  $\mathfrak{p}_j$ is  the monomial prime ideal $(x_{j1}, x_{j2}, \ldots , x_{ji_j}) \subseteq R^{(i_1,\ldots,i_n)}$ for all $j=1,\ldots,n$. Attached to each monomial ideal $I \subset R$ a set of monomial generators $\{\mathbf {x}^{\mathbf{ a}_1} , \ldots , {\mathbf x}^{{\mathbf a}_m}\}$, where ${\mathbf x}^{\mathbf a_i}={x_1}^{a_i(1)}\cdots {x_n}^{a_i(n)}$ and $a_i(j)$ denotes the $j$th component of the vector ${\mathbf a}_i=(a_i(1),\ldots,a_i(n))$ for all $i=1,\ldots,m$. We define the {\it expansion of I with respect to the n-tuple $(i_1, \ldots, i_n)$}, denoted by $I^{(i_1,\ldots,i_n)}$, to be the monomial ideal
$$I^{(i_1,\ldots,i_n)} = \sum_{i=1}^m \mathfrak{p}_1^{a_i(1)}\cdots \mathfrak{p}_n^{a_i(n)}\subseteq R^{(i_1,\ldots,i_n)}.$$
We simply write $R^*$ and $I^*$,
respectively, rather than $R^{(i_1,\ldots,i_n)}$ and $I^{(i_1,\ldots,i_n)}$.\par
To see  an  example, consider    $R = K[x_1, x_2, x_3]$ and the ordered $3$-tuple $(3,1,2)$. We thus have 
 $\mathfrak{p}_1 = (x_{11},x_{12}, x_{13})$, $\mathfrak{p}_2 = (x_{21})$, and $\mathfrak{p}_3 = (x_{31}, x_{32})$. This yields  that 
for the monomial ideal $I = (x_1^3, x_2x_3^2,  x_1 x_3)$, the ideal $I^* \subseteq K[x_{11}, x_{12}, x_{13}, x_{21}, x_{31}, x_{32}]$ is $\mathfrak{p}_1^3+\mathfrak{p}_2\mathfrak{p}^2_3+\mathfrak{p}_1\mathfrak{p}_3$, namely
\begin{align*}
I^*=(&x_{11}^3, x_{12}^3, x_{13}^3, x_{11}^2x_{12}, x_{11}^2x_{13}, x_{12}^2 x_{11}, x_{12}^2 x_{13}, x_{13}^2 x_{11}, 
x_{13}^2 x_{12}, x_{11} x_{12} x_{13}, \\
& x_{21} x_{31}^2, x_{21} x_{32}^2, x_{21} x_{31}x_{32}, x_{11} x_{31},  x_{11}x_{32}, x_{12}x_{31}, x_{12} x_{32}, x_{13}x_{31}, x_{13}x_{32}). 
\end{align*}

%+++++++++++++++++++++++++++++++++++++++++++++++++++++++++++++++++++++++++++++++
%+++++++++++++++++++++++++++++++++++++++++++++++++++++++++++++++++++++++++++++++

\bigskip
 To show Lemma \ref{Lem.SPP.Index}, we need the  following  lemma, which we state here for convenience.

\begin{lemma} (\cite [Lemma 1.1]{BH}) \label{Lem.Bayati.Expansion}
Let $I$ and $J$ be monomial ideals in a polynomial ring $S$. Then
\begin{itemize}
\item[(i)] $f \in I^*$ if and only if $\pi(f)\in I$, for all $f\in S^*$;
\item[(ii)] $(I + J)^* = I^* + J^*$;
\item[(iii)] $(IJ)^* = I^*J^*$;
\item[(iv)] $(I \cap J)^* = I^*\cap J^*$;
\item[(v)] $(I : J)^* = (I^* : J^*)$;
\item[(vi)] $\sqrt{I^*} = (\sqrt{I})^*$;
\item[(vii)] If the monomial ideal $Q$ is $\mathfrak{p}$-primary, then $Q^*$ is
$\mathfrak{p}^*$-primary.
\end{itemize}
\end{lemma}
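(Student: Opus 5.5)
The statement is a lemma quoted from \cite[Lemma 1.1]{BH}, so the plan is to reduce everything to part (i), which characterizes membership in $I^*$ through the specialization map. Here $\pi\colon S^*\to S$ is the $K$-algebra homomorphism sending each $x_{jk}$ to $x_j$, and $I^*$ is a monomial ideal. Since the generators are $\mathfrak{p}_1^{a(1)}\cdots\mathfrak{p}_n^{a(n)}$, a monomial $w\in S^*$ lies in $\mathfrak{p}_1^{a(1)}\cdots\mathfrak{p}_n^{a(n)}$ exactly when, for each $j$, its total degree in the block $x_{j1},\dots,x_{ji_j}$ is at least $a(j)$. That is the same as saying $\mathbf{x}^{\mathbf a}\mid\pi(w)$. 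Hence a monomial $w$ lies in $I^*$ if and only if $\pi(w)\in I$. For a polynomial $f$, I would apply this termwise, since both $I^*$ and $I$ are monomial ideals and $\pi$ maps monomials to monomials. One direction is immediate from $\pi(I^*)\subseteq I$. For the converse I would read (i) as a statement about monomials $f$, because cancellation under $\pi$ could otherwise break it; every later step only needs monomials. As a first step I would also check that $I^*$ does not depend on the chosen generating set, using the monomial criterion.

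With the monomial criterion in hand, parts (ii)--(v) become statements about which monomials lie in the ideals.
\begin{itemize}
\item[(ii)] A monomial lies in $I^*+J^*$ iff it lies in $I^*$ or in $J^*$, iff its image lies in $I$ or $J$, iff its image lies in $I+J$.
\item[(iii)] The generators multiply: $\prod\mathfrak{p}_j^{a(j)}\prod\mathfrak{p}_j^{b(j)}=\prod\mathfrak{p}_j^{a(j)+b(j)}$, so the generators of $(IJ)^*$ are products of generators of $I^*$ and $J^*$.
\item[(iv)] A monomial $w$ lies in $I^*\cap J^*$ iff $\pi(w)\in I\cap J$, and both sides are monomial ideals.
\item[(v)] A monomial $w$ lies in $(I^*:J^*)$ iff $wg\in I^*$ for every generator $g$ of $J^*$. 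By (i) this means $\pi(w)\pi(g)\in I$. As $g$ runs over the generators of $J^*$, the images $\pi(g)$ run over exactly the generators $\mathbf{x}^{\mathbf b}$ of $J$, so the condition is $\pi(w)\in(I:J)$.
\end{itemize}

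For (vi), $w\in\sqrt{I^*}$ iff $w^k\in I^*$ for some $k$, iff $\pi(w)^k\in I$, iff $\pi(w)\in\sqrt I$. This equals $(\sqrt I)^*$ by (i) applied to the monomial ideal $\sqrt I$.

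For (vii), write $\mathfrak{p}=(x_j : j\in A)$. Then $\mathfrak{p}^*$ is the sum of the block primes $\mathfrak{p}_j$ with $j\in A$, which is prime, and $\sqrt{Q^*}=\mathfrak{p}^*$ by (vi). For primariness I would use the standard criterion for monomial ideals: a monomial ideal is primary to a monomial prime $P$ iff every variable not in $P$ is a nonzerodivisor modulo it. So take $x_{jk}$ with $j\notin A$ and a monomial $w$ with $x_{jk}w\in Q^*$. Then $x_j\pi(w)\in Q$, and since $x_j\notin\mathfrak{p}$, $Q$ primary gives $\pi(w)\in Q$, hence $w\in Q^*$.

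The main obstacle is the careful formulation of (i) for nonmonomial $f$ together with the well-definedness of $I^*$. Once the monomial criterion is set up cleanly, the remaining parts are routine.
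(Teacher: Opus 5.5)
The paper gives no proof of this lemma; it is quoted from Bayati--Herzog, so there is no internal argument to compare with. Your proposal is a correct, self-contained proof.

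The block-degree description of $\mathfrak{p}_1^{a(1)}\cdots\mathfrak{p}_n^{a(n)}$ gives the monomial criterion $w\in I^*\iff\pi(w)\in I$. This criterion does two jobs. It shows that $I^*$ does not depend on the chosen generators, which is what makes (iii) a one-line computation, since the products $\mathbf{x}^{\mathbf{a}_i+\mathbf{b}_k}$ need not be minimal generators of $IJ$. It also reduces (ii), (iv), (v) and (vi) to comparing the monomials of monomial ideals.

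You are right to restrict (i) to monomials, because as literally stated it is false. For $I=(x_1^2)$ and $i_1\geq 2$, the binomial $f=x_{11}-x_{12}$ has $\pi(f)=0\in I$, yet $f\notin I^*=\mathfrak{p}_1^2$. The paper only uses (i) to deduce $H=F\iff H^*=F^*$ for monomial ideals, where the monomial version suffices.

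In (vii), your nonzerodivisor criterion is valid only together with the condition $\sqrt{Q^*}=\mathfrak{p}^*$. You establish that condition first via (vi), so the argument is complete.
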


%+++++++++++++++++++++++++++++++++++++++++++++++++++++++++++++++++++++++++++++++
%+++++++++++++++++++++++++++++++++++++++++++++++++++++++++++++++++++++++++++++++

In the sequel, we recall the definition of the weighting operation, beginning with the following definition.

\begin{definition}
\em{
A \textit{weight} over a polynomial ring $R=K[x_1, \ldots, x_n]$ is a function $W: \{x_1, \ldots, x_n\} \rightarrow \mathbb{N}$ such that  $w_i=W(x_i)$ is called the {\it weight} of the variable $x_i$. For a monomial ideal $I\subset R $ and a weight $W$, we define the \textit{weighted ideal}, denoted 
by $I_W$, to be the ideal generated by $\{h(u) : u\in \mathcal{G}(I) \}$, where $h$ is the unique homomorphism $h: R \rightarrow R$ given by 
$h(x_i)= x_i^{w_i}$. 
}
\end{definition}

%++++++++++++++++++++++++++++++++++++++++++++++++++++++++++++++++++++++++++
%++++++++++++++++++++++++++++++++++++++++++++++++++++++++++++++++++++++++++

As  an example, assume   $R=K[x_1, x_2, x_3, x_4, x_5]$ and the monomial ideal  $I=(x_1^2x_3x_4^5, x_2^4x_4^3x_5^2, x_1x_3^2, x_4x_5^3)$ in $R$. 
Also,  let $W : \{x_1, x_2, x_3, x_4, x_5\} \rightarrow \mathbb{N}$ be a weight over $R$ with $W(x_1)=1$, $W(x_2)=4$, $W(x_3)=2$, $W(x_4)=3$, and $W(x_5)=2$. It is easy to  check that  the weighted ideal $I_W$ is given by 
$$I_W=(x_1^2x_3^2x_4^{15}, x_2^{16}x_4^9x_5^4, x_1x_3^4, x_4^3x_5^6).$$

\bigskip
The next auxiliary result will be used in the proof of Lemma \ref{Lem.SPP.Index}, which we include it  here to facilitate convenient access and easy reference.

\begin{lemma}\label{LEM. Weighted} (\cite[Lemma 3.5]{SN})
Let $I$ and $J$ be two monomial ideals of a polynomial ring $R=K[x_1, \ldots, x_n]$, and $W$ a weight over $R$. Then the following statements hold. 
\begin {itemize}
\item[(i)] $(I+J)_W= I_W + J_W$;
\item[(ii)] $(IJ)_W= I_W J_W$;
\item[(iii)] $(I\cap J)_W= I_W \cap J_W$;
\item[(iv)] $(I :_RJ)_W= (I_W :_R J_W)$.
\end{itemize}
\end{lemma}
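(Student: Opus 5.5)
The plan is to reduce everything to a single membership criterion for weighted ideals, analogous to Lemma \ref{Lem.Bayati.Expansion}(i) for expansions, and then check each of (i)--(iv) on monomials. Throughout, the weights $w_i$ are positive integers, and all ideals involved are monomial ideals; the colon of monomial ideals is again a monomial ideal. Hence it suffices in every case to compare the monomials lying in the two sides.

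Write a monomial as $\mathbf{x}^{\mathbf b}=x_1^{b_1}\cdots x_n^{b_n}$, so that $h(\mathbf{x}^{\mathbf a})=x_1^{w_1a_1}\cdots x_n^{w_na_n}$. Define the map $\varphi$ on monomials by
$$\varphi(\mathbf{x}^{\mathbf b})=x_1^{\lfloor b_1/w_1\rfloor}\cdots x_n^{\lfloor b_n/w_n\rfloor}.$$
\textbf{Key claim:} for every monomial ideal $I$ and every monomial $m$, we have $m\in I_W$ if and only if $\varphi(m)\in I$.

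To prove the claim, first suppose $h(u)\mid m$ for some $u=\mathbf{x}^{\mathbf a}\in\mathcal G(I)$. Then $w_ia_i\le b_i$ for all $i$, which gives $a_i\le\lfloor b_i/w_i\rfloor$, so $u\mid\varphi(m)$ and $\varphi(m)\in I$. Conversely, if $\varphi(m)\in I$, then some $u\in\mathcal G(I)$ divides $\varphi(m)$. Since $h$ preserves divisibility, $h(u)\mid h(\varphi(m))$, and $h(\varphi(m))\mid m$ by the definition of the floor. Hence $m\in I_W$.

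We also record two elementary facts:
\begin{itemize}
\item $h$ is multiplicative on monomials;
\item $\varphi(m\,h(v))=\varphi(m)\,v$ for all monomials $m,v$, because $\lfloor (b+wa)/w\rfloor=\lfloor b/w\rfloor+a$.
\end{itemize}

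With the claim in hand, the four statements follow in turn.

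\emph{(i)} The set $h(\mathcal G(I))\cup h(\mathcal G(J))$ generates both $(I+J)_W$ and $I_W+J_W$. Indeed, $\mathcal G(I+J)\subseteq\mathcal G(I)\cup\mathcal G(J)$, and the remaining generators $h(u)$ with $u\in\mathcal G(I)\cup\mathcal G(J)$ lie in $(I+J)_W$ by the key claim, since $\varphi(h(u))=u\in I+J$.

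\emph{(ii)} By multiplicativity of $h$, the set $\{h(u)h(v)=h(uv) : u\in\mathcal G(I),\ v\in\mathcal G(J)\}$ generates $I_WJ_W$. Each such $uv$ lies in $IJ$, so $h(uv)\in(IJ)_W$ by the key claim. Conversely, each minimal generator of $IJ$ has the form $uv$ with $u\in\mathcal G(I)$ and $v\in\mathcal G(J)$, so its image under $h$ lies in $I_WJ_W$. Hence the two ideals coincide.

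\emph{(iii)} For a monomial $m$, the key claim applied three times gives: $m\in I_W\cap J_W$ if and only if $\varphi(m)\in I$ and $\varphi(m)\in J$, if and only if $\varphi(m)\in I\cap J$, if and only if $m\in(I\cap J)_W$.

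\emph{(iv)} For a monomial $m$, we have $m\in(I_W:J_W)$ if and only if $m\,h(v)\in I_W$ for all $v\in\mathcal G(J)$. By the key claim this holds if and only if $\varphi(m\,h(v))=\varphi(m)\,v\in I$ for all such $v$. That says exactly $\varphi(m)\in(I:J)$, which by the key claim again means $m\in(I:J)_W$.

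The main obstacle is the key claim itself, and in particular making sure that the floor map interacts correctly with the shift by $h(v)$ used in (iv). Once these two points are checked, (i)--(iv) are formal consequences.
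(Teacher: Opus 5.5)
Your proof is correct. The paper gives no argument for this lemma; it is quoted from \cite[Lemma 3.5]{SN}, so there is no in-paper proof to follow step by step.

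Your route is to prove a single membership test, $m\in I_W$ if and only if $\varphi(m)\in I$ for the floor map $\varphi$. This is the weighted analogue of Lemma \ref{Lem.Bayati.Expansion}(i) for expansions. You then read off (i)--(iv) monomial by monomial, and the identity $\varphi(m\,h(v))=\varphi(m)\,v$ is what makes the colon in (iv) go through.

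A generator-level proof would instead check that $h$ commutes with $\mathrm{lcm}$ and with $u\mapsto u/\gcd(u,v)$. It would then use $I\cap J=(\mathrm{lcm}(u,v)\mid u\in\mathcal G(I),\ v\in\mathcal G(J))$ and $(I:J)=\bigcap_{v\in\mathcal G(J)}(I:v)$, reducing (iv) to (iii). Your approach avoids these computations and treats all four parts uniformly.

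Your key claim also gives two further facts immediately, since $\varphi(h(u))=u$:
\begin{itemize}
\item $I_W$ is generated by $h(u)$ for any monomial generating set of $I$, not just $\mathcal G(I)$.
\item $H=F$ if and only if $H_W=F_W$. The paper uses this without proof in Lemma \ref{Lem.SPP.Index}(ii) and Lemma \ref{Lem.Fluctuation}(ii).
\end{itemize}

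Your standing assumption $w_i\geq 1$ is needed for $\varphi$ to be defined. It is also the reading the paper requires, since it takes $W(x_i)=\alpha_i\geq 1$. With a zero weight, say $w_1=0$, one gets $(x_1)_W=R=(1)_W$, so the equivalence $H=F \Leftrightarrow H_W=F_W$ would fail.
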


%++++++++++++++++++++++++++++++++++++++++++++++++++++++++++++++++++++++++++
%++++++++++++++++++++++++++++++++++++++++++++++++++++++++++++++++++++++++++

 We are now in a position to state and prove Lemma \ref{Lem.SPP.Index},  which is essential for formulating Theorem \ref{Th.SPP.Index}.

\begin{lemma}\label{Lem.SPP.Index} 
Let  $I \subset R=K[x_1, \ldots, x_n]$ be a monomial ideal  and $\ell_{0}\geq 1$. Then the following statements hold.
\begin{itemize}
\item[(i)] $I$ has the strong persistence index $\ell_0$  if and only if $I^*$ has the  strong persistence index $\ell_0$, 
  where $I^*$ denotes the  expansion of $I$.  
\item[(ii)] $I$ has the strong persistence index $\ell_0$  if and only if $I_W$ has the  strong persistence index $\ell_0$, 
  where $I_W$ denotes the  weighted ideal.    
\end{itemize}
\end{lemma}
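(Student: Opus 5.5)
The idea is to show that, for every $\ell\geq 1$, the equality $(I^{\ell+1}:_R I)=I^{\ell}$ holds if and only if $(({I^*})^{\ell+1}:_{R^*} I^*)=(I^*)^{\ell}$ holds. The analogous statement for $I_W$ is proved the same way. Once this pointwise equivalence is available, the sets $\{\ell : (I^{\ell+1}:I)=I^\ell\}$ for $I$ and for $I^*$ (respectively $I_W$) coincide. Hence the smallest $\ell_0$ such that every $\ell\geq\ell_0$ lies in this set is the same for both ideals. So $I$ has strong persistence index $\ell_0$ exactly when $I^*$ (respectively $I_W$) does.

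For part (i), fix $\ell\geq 1$. Applying Lemma \ref{Lem.Bayati.Expansion}(iii) repeatedly gives $(I^{k})^*=(I^*)^k$ for all $k\geq 1$. Then Lemma \ref{Lem.Bayati.Expansion}(v) gives
\[
(I^{\ell+1}:I)^*=((I^{\ell+1})^*:I^*)=((I^*)^{\ell+1}:I^*).
\]
Thus if $(I^{\ell+1}:I)=I^\ell$, expanding both sides yields $((I^*)^{\ell+1}:I^*)=(I^*)^\ell$.

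The converse needs the expansion operation to be injective on monomial ideals, i.e.\ $J^*=L^*$ implies $J=L$. This is the only point that is not purely formal. I would obtain it from Lemma \ref{Lem.Bayati.Expansion}(i). Let $\iota$ be the substitution $x_j\mapsto x_{j1}$, so that $\pi(\iota(u))=u$ for every monomial $u\in R$. Then for a monomial $u\in J$ we have $\iota(u)\in J^*=L^*$, hence $u=\pi(\iota(u))\in L$, and symmetrically. Since the ideals are monomial, this gives $J=L$. Apply this with $J=(I^{\ell+1}:I)$ and $L=I^\ell$; both are monomial ideals because colons of monomial ideals are monomial.

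For part (ii), the argument is identical. Lemma \ref{LEM. Weighted}(ii) gives $(I^k)_W=(I_W)^k$, and Lemma \ref{LEM. Weighted}(iv) gives $(I^{\ell+1}:I)_W=((I_W)^{\ell+1}:I_W)$. It remains to check that $J\mapsto J_W$ is injective on monomial ideals. Here I assume the weights are positive, as they must be for the weighting to make sense. The map $h$ is then injective on monomials and preserves divisibility in both directions: $h(u)\mid h(v)$ if and only if $u\mid v$, since the exponents are scaled by $w_i>0$.

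A monomial of $J_W$ is divisible by $h(u)$ for some $u\in\mathcal G(J)$. So if $J_W=L_W$, then for $u\in\mathcal G(J)$ we get $h(v)\mid h(u)$ for some $v\in\mathcal G(L)$, hence $v\mid u$ and $u\in L$. By symmetry $J=L$.

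The main obstacle I expect is exactly this injectivity and generator-compatibility for weighting. In particular, I would need to confirm that $h$ applied to any generating set (not only the minimal one) produces $J_W$, so that $(I^k)_W$ is computed correctly. Lemma \ref{LEM. Weighted} already packages this.
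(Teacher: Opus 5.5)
Your proposal is correct and is essentially the paper's proof: for each $\ell$ you show that $(I^{\ell+1}:I)=I^{\ell}$ holds exactly when the corresponding equality holds for $I^*$ (resp.\ $I_W$). You get this from Lemma \ref{Lem.Bayati.Expansion}(iii),(v) (resp.\ Lemma \ref{LEM. Weighted}(ii),(iv)) together with injectivity of $J\mapsto J^*$ and $J\mapsto J_W$ on monomial ideals. You go slightly further than the paper by actually justifying that injectivity, via the section $\iota$ and divisibility under $h$, which the paper only asserts; and your observation that the sets of good $\ell$ coincide is a cleaner way to pass to the index than the paper's minimality-by-contradiction step.
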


\begin{proof}
(i) We first observe that for any two monomial ideals $F$ and $H$, Lemma \ref{Lem.Bayati.Expansion}(i) implies that $H=F$ if and only if $H^*=F^*$. 
Now, we assume that $I$ has the strong persistence index $\ell_0$. This implies that $\ell_0$ is the smallest integer such that $(I^{\ell+1}:_RI)=I^{\ell}$ for all  $\ell\geq \ell_0$. According to Lemma \ref{Lem.Bayati.Expansion}(v), we have $(I^{\ell+1} :_R I)^* = ((I^{\ell+1})^*:_{R^*} I^*)$, and hence 
 $((I^{\ell+1})^*:_{R^*} I^*)=(I^{\ell})^{*}$  for all  $\ell\geq \ell_0$. It follows from Lemma \ref{Lem.Bayati.Expansion}(iii) that 
$(I^{\ell+1})^*=(I^*)^{\ell+1}$ and $(I^{\ell})^*=(I^*)^{\ell}$.  We therefore can deduce that   $((I^*)^{\ell+1}:_{R^*} I^*)=(I^*)^{\ell}$  for all  $\ell\geq \ell_0$. Suppose, on the contrary, that $((I^*)^{\ell'+1}:_{R^*} I^*)=(I^*)^{\ell'}$  for some   $\ell'\leq \ell_0-1$.
 One can immediately derive from parts (iii) and (v) of  Lemma \ref{Lem.Bayati.Expansion}   that $(I^{\ell'+1}:_RI)=I^{\ell'}$, which contradicts the minimality of $\ell_0$. 
Consequently, we get $I^*$ has the  strong persistence index $\ell_0$. 
 By a similar argument,  we can show  the converse.

(ii)   Note that for any two monomial ideals $F$ and  $H$, we have $H=F$ if and only if $H_W=F_W$. Now, the claim can be established by combining Lemma \ref{LEM. Weighted} and mimicking the proof of part (i).
\end{proof}

%++++++++++++++++++++++++++++++++++++++++++++++++++++++++++++++++++++++++++
%++++++++++++++++++++++++++++++++++++++++++++++++++++++++++++++++++++++++++

To formulate Theorem \ref{Th.SPP.Index}, we have to employ Lemma \ref{Mat-Local}.  Before stating it, we recall the definition of the  monomial localization. Let  $I\subset R=K[x_1, \ldots, x_n]$ be  a monomial ideal, where $K$ is a field. We  denote by $V^*(I)$ the set of monomial prime ideals containing $I$. Let $\mathfrak{p}=(x_{i_1}, \ldots, x_{i_r})$ be a monomial prime ideal. Then the  {\it monomial localization} of $I$ with respect to $\mathfrak{p}$, denoted by $I(\mathfrak{p})$, is the ideal in the polynomial ring $R(\mathfrak{p})=K[x_{i_1}, \ldots, x_{i_r}]$  which is obtained from $I$ by using the $K$-algebra homomorphism $R\rightarrow R(\mathfrak{p})$  with $x_j\mapsto 1$  for all $x_j\notin \{x_{i_1}, \ldots, x_{i_r}\}$. 
 We also  need to recall the following auxiliary  results, which are  necessary for us to show  Lemma \ref{Mat-Local}. We insert them here for ease of reference.

%%%%%%%%%%%%%%%%%%%%%%%%%%%%%%%%%%%%%%%%%%%%%%%%%%%%%%%%%%%
%%%%%%%%%%%%%%%%%%%%%%%%%%%%%%%%%%%%%%%%%%%%%%%%%%%%%%%%%%%

\begin{fact}\label{fact3} (\cite[Exercise 9.42]{sharp}) 
Let 
$$0\longrightarrow L{\longrightarrow} M {\longrightarrow} N  \longrightarrow  0,$$
be a short exact sequence of modules and homomorphisms over the commutative Noetherian ring $R$. Then, 
$\mathrm{Ass}(L) \subseteq  \mathrm{Ass}(M) \subseteq 
 \mathrm{Ass}(L) \cup  \mathrm{Ass}(N).$ 
\end{fact}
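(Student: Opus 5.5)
Since this Fact is the standard one about associated primes in a short exact sequence, I would prove it directly from the definition $\mathfrak{p}=(0:_R x)$ for some $x$ in the module. Write the sequence as $0\to L\xrightarrow{f} M\xrightarrow{g} N\to 0$, with $f$ injective and $\ker g=f(L)$.

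\emph{First inclusion.} Let $\mathfrak{p}\in \mathrm{Ass}(L)$, so $\mathfrak{p}=(0:_R x)$ for some $x\in L$. Since $f$ is injective and $R$-linear, $rf(x)=0$ if and only if $rx=0$. Hence $(0:_R f(x))=(0:_R x)=\mathfrak{p}$, and so $\mathfrak{p}\in\mathrm{Ass}(M)$.

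\emph{Second inclusion.} Let $\mathfrak{p}\in\mathrm{Ass}(M)$, so $\mathfrak{p}=(0:_R m)$ for some $m\in M$. Then $Rm\cong R/\mathfrak{p}$, and the key observation is that $R/\mathfrak{p}$ is a domain. Concretely, for any $r\notin\mathfrak{p}$ we have $(0:_R rm)=\mathfrak{p}$: if $s\,rm=0$ then $sr\in\mathfrak{p}$, which forces $s\in\mathfrak{p}$, and the reverse containment is clear. So every nonzero element of $Rm$ has annihilator exactly $\mathfrak{p}$. We now split into two cases according to whether $Rm\cap f(L)=0$.

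\begin{itemize}
\item[(a)] Suppose $Rm\cap f(L)\neq 0$. Choose a nonzero element $rm=f(x)$ with $x\in L$; nonzero means $r\notin\mathfrak{p}$. By the observation above and the first step, $(0:_R x)=(0:_R f(x))=(0:_R rm)=\mathfrak{p}$, so $\mathfrak{p}\in\mathrm{Ass}(L)$.
\item[(b)] Suppose $Rm\cap f(L)=0$. Then $g$ restricted to $Rm$ is injective, because $\ker g=f(L)$. Hence $(0:_R g(m))=(0:_R m)=\mathfrak{p}$, and so $\mathfrak{p}\in\mathrm{Ass}(N)$.
\end{itemize}

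The only non-formal step is the domain argument in case (a). It guarantees that we can pick a nonzero element of $Rm$ lying in $f(L)$ without enlarging its annihilator beyond $\mathfrak{p}$. Noetherianity of $R$ is not needed for either inclusion; it is only part of the standing setting.
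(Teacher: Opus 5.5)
Your proof is correct: injectivity of $f$ gives the first inclusion. For the second, every nonzero element of $Rm\cong R/\mathfrak{p}$ has annihilator exactly $\mathfrak{p}$, and the case split on whether $Rm\cap f(L)=0$ then puts $\mathfrak{p}$ in $\mathrm{Ass}(L)$ or $\mathrm{Ass}(N)$. The paper does not prove this Fact itself; it only cites Sharp's Exercise 9.42. What you wrote is the standard textbook argument behind that citation, and your remark is also correct that Noetherianity is not needed when $\mathrm{Ass}$ is defined via annihilators of single elements.
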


%%%%%%%%%%%%%%%%%%%%%%%%%%%%%%%%%%%%%%%%%%%%%%%%%%%%%%%%%%%
%%%%%%%%%%%%%%%%%%%%%%%%%%%%%%%%%%%%%%%%%%%%%%%%%%%%%%%%%%%

  \begin{lemma}\label{LEM.Localization} (\cite[Lemma 3.13]{SN})
Let $I$ and $J$ be two monomial ideals   in $R=K[x_1, \ldots, x_n]$, and $\mathfrak{p}$ be a monomial prime ideal of $R$. Then the following 
statements hold.
\begin {itemize}
\item[(i)]  $(I+J)(\mathfrak{p})= I(\mathfrak{p}) + J(\mathfrak{p})$;
\item[(ii)] $(IJ)(\mathfrak{p})= I(\mathfrak{p})  J(\mathfrak{p})$;
\item[(iii)] $(I\cap J)(\mathfrak{p})= I(\mathfrak{p}) \cap  J(\mathfrak{p})$;
\item[(iv)] $(I :_RJ)(\mathfrak{p})= (I(\mathfrak{p}) :_{R(\mathfrak{p})}  J(\mathfrak{p}))$;
\item[(v)]  If $Q$ is a $\mathfrak{q}$-primary monomial ideal in $R$ with $ \mathfrak{q}\subseteq \mathfrak{p}$, then $Q(\mathfrak{p})$ is a $\mathfrak{q}$-primary monomial ideal in $R(\mathfrak{p})$.
\end{itemize}
 \end{lemma}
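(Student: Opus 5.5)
The plan is to reduce every item to a description of which monomials lie in $I(\mathfrak{p})$. Let $\varphi\colon R\to R(\mathfrak{p})$ be the $K$-algebra map with $x_j\mapsto 1$ for $x_j\notin\mathfrak{p}$. For a monomial ideal $I$, the ideal $I(\mathfrak{p})$ is generated by $\{\varphi(u): u\in\mathcal{G}(I)\}$, so it is again a monomial ideal. Write $T$ for the set of monomials of $R$ in the variables outside $\mathfrak{p}$.

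First I would prove the following criterion:
\[
\text{for a monomial } u'\in R(\mathfrak{p}),\qquad u'\in I(\mathfrak{p}) \iff u'w\in I \text{ for some } w\in T .
\]
If $\varphi(u)\mid u'$ with $u\in\mathcal{G}(I)$, then $u=\varphi(u)\,w_0$ with $w_0\in T$, so $u\mid u'w_0$ and hence $u'w_0\in I$. Conversely, if $u'w\in I$, then $\varphi(u'w)=u'$ lies in $I(\mathfrak{p})$. Since all ideals involved are monomial, every equality can be checked on monomials.

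With the criterion in hand, items (i)--(iii) are routine.
\begin{itemize}
\item[(i)] A generating set of $I+J$ is $\mathcal{G}(I)\cup\mathcal{G}(J)$, and $\varphi$ maps it to generators of $I(\mathfrak{p})+J(\mathfrak{p})$.
\item[(ii)] $\varphi$ is multiplicative, so the products $uv$ with $u\in\mathcal{G}(I)$, $v\in\mathcal{G}(J)$ are sent to generators of $I(\mathfrak{p})J(\mathfrak{p})$.
\item[(iii)] If $u'\in I(\mathfrak{p})\cap J(\mathfrak{p})$, take $w_1,w_2\in T$ with $u'w_1\in I$ and $u'w_2\in J$. Then $u'w_1w_2\in I\cap J$, so $u'\in (I\cap J)(\mathfrak{p})$. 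The reverse inclusion is clear, because $\varphi$ maps $I\cap J$ into both $I(\mathfrak{p})$ and $J(\mathfrak{p})$.
\end{itemize}

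Item (iv) is where I expect the most care to be needed.

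For "$\subseteq$": let $u'\in (I:_RJ)(\mathfrak{p})$ be a monomial. Choose $w\in T$ with $u'w\in(I:J)$. For each $v\in\mathcal{G}(J)$ we get $u'wv\in I$, and applying $\varphi$ gives $u'\varphi(v)\in I(\mathfrak{p})$. Hence $u'\in(I(\mathfrak{p}):J(\mathfrak{p}))$.

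For "$\supseteq$": let $u'$ be a monomial with $u'\varphi(v)\in I(\mathfrak{p})$ for all $v\in\mathcal{G}(J)$. The criterion gives $w_v\in T$ with $u'\varphi(v)w_v\in I$. Write $v=\varphi(v)t_v$ with $t_v\in T$. Then $u'vw_v\in I$. Set $W=\prod_{v\in\mathcal{G}(J)} w_v\in T$. Then $u'Wv\in I$ for every generator $v$ of $J$, so $u'W\in(I:J)$, and therefore $u'=\varphi(u'W)\in(I:J)(\mathfrak{p})$.

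The point to watch is that one common $W$ works for all the finitely many generators of $J$ at once. This is possible because $\mathcal{G}(J)$ is finite.

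For (v), I would use the standard description of primary monomial ideals. A monomial ideal $Q$ is $\mathfrak{q}$-primary if and only if every minimal generator of $Q$ involves only variables of $\mathfrak{q}$, and $Q$ contains a pure power of each variable of $\mathfrak{q}$. Since $\mathfrak{q}\subseteq\mathfrak{p}$, $\varphi$ fixes every generator of $Q$. So $Q(\mathfrak{p})$ is generated in $R(\mathfrak{p})$ by the same monomials, which still satisfy both conditions relative to $\mathfrak{q}$. Hence $Q(\mathfrak{p})$ is $\mathfrak{q}$-primary in $R(\mathfrak{p})$.
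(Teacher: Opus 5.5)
Your proof is correct and complete. There is nothing in the paper to compare it against: the paper gives no proof of this lemma. It imports the result verbatim from \cite[Lemma 3.13]{SN} and uses it as a black box.

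Your key criterion says that a monomial $u'\in R(\mathfrak{p})$ lies in $I(\mathfrak{p})$ if and only if $u'w\in I$ for some $w\in T$. It rests on the same splitting the authors use in the proof of Lemma~\ref{Mat-Local} to show $I(\mathfrak{p})R_{\mathfrak{p}}=I_{\mathfrak{p}}$: they write $u=fg$ with $f=u(\mathfrak{p})$ and $\mathrm{supp}(g)\cap\mathrm{supp}(\mathfrak{p})=\emptyset$.

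In ring-theoretic terms, your criterion says $I(\mathfrak{p})=IR_T\cap R(\mathfrak{p})$, where $R_T$ inverts the variables outside $\mathfrak{p}$. From there one could derive (i)--(iv) by a more structural route. Localization commutes with sums, products, finite intersections and colons by finitely generated ideals. Moreover, $R_T$ is a Laurent polynomial ring over $R(\mathfrak{p})$, hence free, so extended ideals contract back to themselves. Your direct monomial argument avoids that detour. It also makes explicit the one place where finiteness matters, namely the common multiplier $W=\prod_{v\in\mathcal{G}(J)}w_v$ in (iv).

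The only step you leave implicit is in (i) and (ii). There you push forward generating sets that need not be minimal: $\mathcal{G}(I)\cup\mathcal{G}(J)$, and the products $uv$. This is harmless. Since $I(\mathfrak{p})=\varphi(I)$ and $\varphi$ is a surjective ring map, the image of any generating set of an ideal generates its monomial localization, not just the image of $\mathcal{G}(I)$.
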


%%%%%%%%%%%%%%%%%%%%%%%%%%%%%%%%%%%%%%%%%%%%%%%%%%%%%
%%%%%%%%%%%%%%%%%%%%%%%%%%%%%%%%%%%%%%%%%%%%%%%%%%%%%

\begin{lemma} \label{Mat-Local}
 Let $I$ and $J$ be two monomial  ideals in $R=K[x_1, \ldots, x_n]$. If    $J(\mathfrak{p})\subseteq I(\mathfrak{p})$ 
 for  every  $\mathfrak{p}\in \mathrm{Ass}_R(R/I)$, then $J\subseteq I$. 
  \end{lemma}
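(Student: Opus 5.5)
We plan to argue by contradiction, using a primary decomposition of $I$ together with the rules of Lemma \ref{LEM.Localization}. Let $I=Q_1\cap\cdots\cap Q_s$ be a minimal primary decomposition of $I$ into monomial primary ideals, with $\sqrt{Q_j}=\mathfrak{p}_j$, so that $\mathrm{Ass}_R(R/I)=\{\mathfrak{p}_1,\ldots,\mathfrak{p}_s\}$. Since $J\subseteq I$ holds if and only if $J\subseteq Q_j$ for every $j$, it will suffice to show $J\subseteq Q_j$ for each fixed $j$. To do this we will compare monomials in $R$ with their images in $R(\mathfrak{p}_j)$.

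Fix $j$ and set $\mathfrak{p}=\mathfrak{p}_j$. By Lemma \ref{LEM.Localization}(iii), $I(\mathfrak{p})=\bigcap_i Q_i(\mathfrak{p})$, so $I(\mathfrak{p})\subseteq Q_j(\mathfrak{p})$. Since $\sqrt{Q_j}=\mathfrak{p}$, every minimal generator of $Q_j$ involves only variables from $\mathfrak{p}$, because a monomial $\mathfrak{p}$-primary ideal is generated by monomials in the variables of $\mathfrak{p}$. Hence $Q_j(\mathfrak{p})$ is simply $Q_j$ viewed in the smaller ring $R(\mathfrak{p})$, and $Q_j=Q_j(\mathfrak{p})R$.

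Now let $u\in\mathcal{G}(J)$. Its image $u(\mathfrak{p})$, obtained by setting the variables outside $\mathfrak{p}$ equal to $1$, lies in $J(\mathfrak{p})\subseteq I(\mathfrak{p})\subseteq Q_j(\mathfrak{p})$. Since $u(\mathfrak{p})$ divides $u$ in $R$ and $Q_j(\mathfrak{p})R=Q_j$, we get $u\in Q_j$. Doing this for every $j$ gives $u\in I$, and hence $J\subseteq I$.

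The main point to justify carefully is the claim that a monomial $\mathfrak{p}$-primary ideal is generated by monomials supported in $\mathfrak{p}$. This follows from the standard fact that if $x_k\notin\mathfrak{p}$, then $x_k$ is a nonzerodivisor modulo $Q_j$. Consequently, whenever $x_k u'\in Q_j$ we have $u'\in Q_j$, so the $x_k$-parts can be stripped off the generators. Everything else in the argument is formal bookkeeping.
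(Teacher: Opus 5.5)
Your proof is correct, but it takes a different route from the paper's. The paper never uses a primary decomposition. It first checks that monomial localization agrees with ordinary localization, $I(\mathfrak{p})R_{\mathfrak{p}}=I_{\mathfrak{p}}$. It then argues by contradiction with the nonzero module $(I+J)/I$, whose associated primes lie in $\mathrm{Ass}_R(R/I)$ by the exact sequence $0\to (I+J)/I\to R/I\to R/(I+J)\to 0$. At such a prime $\mathfrak{q}$, the hypothesis gives $J_{\mathfrak{q}}\subseteq I_{\mathfrak{q}}$, so $((I+J)/I)_{\mathfrak{q}}=0$, which is impossible for an associated prime. That argument is the general principle that containment can be tested after localizing at the associated primes of $R/I$, which holds for arbitrary ideals in a Noetherian ring. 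The only monomial-specific input is the comparison $I(\mathfrak{p})R_{\mathfrak{p}}=I_{\mathfrak{p}}$.

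Your argument is instead elementary and direct; despite your opening sentence, no contradiction is actually needed. It rests on two standard facts: the existence of a minimal primary decomposition into monomial primary ideals, and the fact that a $\mathfrak{p}$-primary monomial ideal is generated by monomials in the variables of $\mathfrak{p}$, so it is extended from $R(\mathfrak{p})$. This gives a sharper, componentwise statement: you only use $J(\mathfrak{p}_j)\subseteq Q_j(\mathfrak{p}_j)$ to conclude $J\subseteq Q_j$, and you never pass to $R_{\mathfrak{p}}$. One small simplification: you do not need Lemma~\ref{LEM.Localization}(iii). Since monomial localization is the image under a ring homomorphism, $I\subseteq Q_j$ already gives $I(\mathfrak{p})\subseteq Q_j(\mathfrak{p})$.
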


 \begin{proof}
 Assume that   $J(\mathfrak{p})\subseteq I(\mathfrak{p})$   for  every   $\mathfrak{p}\in \mathrm{Ass}_R(R/I)$. We first demonstrate  that $I(\mathfrak{p})R_{\mathfrak{p}}=I_{\mathfrak{p}}$, where $I_{\mathfrak{p}}$ means the localization of $I$ at $\mathfrak{p}$. To establish this claim, 
 let  $I=(u_1, \ldots, u_t)$. Then, Lemma \ref{LEM.Localization}(i) yields that  $I({\mathfrak{p}})=(u_1({\mathfrak{p}}), \ldots, u_t({\mathfrak{p}}))$, 
 and so    $I({\mathfrak{p}})R_{\mathfrak{p}}=(u_1({\mathfrak{p}})R_{\mathfrak{p}}, \ldots, u_t({\mathfrak{p}})R_{\mathfrak{p}})$. 
 It is also well-known that $I_{\mathfrak{p}}=(({u_1})_{\mathfrak{p}}, \ldots, ({u_t})_{\mathfrak{p}})$. Fix $1\leq i \leq t$. We want to 
 show that $u_i({\mathfrak{p}})R_{\mathfrak{p}}=({u_i})_{\mathfrak{p}}$. It is easy  to check  (for example see \cite[Lemma 5.25]{sharp}),  
 $$({u_i})_{\mathfrak{p}}=\{\frac{a}{r} \mid a\in (u_i) \text{~and~} r\notin \mathfrak{p}\} \text{~and~} {u_i}(\mathfrak{p})R_{\mathfrak{p}}
 =\{\frac{b}{h} \mid b\in  (u_i(\mathfrak{p})) \text{~and~} h\notin \mathfrak{p}\} .$$
 We can write $u_i=fg$ with $\mathrm{supp}(f)\subseteq \mathrm{supp}(\mathfrak{p})$ and $\mathrm{supp}(g)\subseteq 
 \{x_1, \ldots, x_n\} \setminus  \mathrm{supp}(\mathfrak{p})$. This yields that $u_i(\mathfrak{p})=f$. Now, let $a/r \in  ({u_i})_{\mathfrak{p}}$, where $a\in (u_i)$ and   $r\notin \mathfrak{p}$.  Then $a=fgw$ for some monomial $w$, and so $a\in (u_i(\mathfrak{p}))$.
  We thus obtain $a/r \in  {u_i}(\mathfrak{p})R_{\mathfrak{p}}$, and hence $({u_i})_{\mathfrak{p}} \subseteq {u_i}(\mathfrak{p})R_{\mathfrak{p}}$. 
  Conversely, choose $b/h\in {u_i}(\mathfrak{p})R_{\mathfrak{p}}$, where $b\in  (u_i(\mathfrak{p}))$ and $h\notin \mathfrak{p}.$ 
  Then $b=fw'$ for some monomial $w'$, and so $bg=fgw'\in ({u_i})$. Since $\mathrm{supp}(g) \cap \mathrm{supp}(\mathfrak{p})=\emptyset$, this gives that 
  $hg\notin \mathfrak{p}$. Therefore, we can deduce that  $b/h=(bg)/(hg)\in ({u_i})_{\mathfrak{p}}.$
  This yields  that  ${u_i}(\mathfrak{p})R_{\mathfrak{p}} \subseteq ({u_i})_{\mathfrak{p}}$. Consequently, we have $I(\mathfrak{p})R_{\mathfrak{p}}=I_{\mathfrak{p}}$.  Now, suppose, on the contrary, that $J\nsubseteq  I$. 
  This implies that $I+J \neq I$, and so $(I+J)/I\neq 0$. Hence, we get 
 $\mathrm{Ass}_R((I+J)/I)\neq \emptyset$.   Pick an arbitrary element $\mathfrak{q}\in \mathrm{Ass}_R((I+J)/I)$. Hence, 
 $\mathfrak{q}R_{\mathfrak{q}}\in \mathrm{Ass}_{R_{\mathfrak{q}}}((I+J)/I)_{\mathfrak{q}}$.   Here, we   consider the following short exact sequence
 \begin{equation}\label{sequence1}
0\longrightarrow \frac{I+J}{I} \longrightarrow  \frac{R}{I}  \longrightarrow \frac{R}{I+J} \longrightarrow  0.
\end{equation} 
From  (\ref{sequence1}) and Fact \ref{fact3}, we obtain $\mathrm{Ass}_R((I+J)/I) \subseteq \mathrm{Ass}_R(R/I)$, and so  $\mathfrak{q}\in \mathrm{Ass}_R(R/I)$. 
It follows  from our assumption that $J(\mathfrak{q})\subseteq I(\mathfrak{q})$, and hence  $J(\mathfrak{q})R_{\mathfrak{p}}\subseteq I(\mathfrak{q})R_{\mathfrak{p}}$. Based on the 
 above discussion, we can deduce that   $J_{\mathfrak{q}}  \subseteq I_{\mathfrak{q}}$.
  This leads to $I_{\mathfrak{q}} + J_{\mathfrak{q}}=  I_{\mathfrak{q}}$, and thus $(I_{\mathfrak{q}} + J_{\mathfrak{q}})/ I_{\mathfrak{q}}=0$.  
    Since $((I+J)/I)_{\mathfrak{q}}=(I_{\mathfrak{q}} + J_{\mathfrak{q}})/ I_{\mathfrak{q}}$, we obtain  $((I+J)/I)_{\mathfrak{q}}=0$, which 
    contradicts the fact that  $\mathfrak{q}R_{\mathfrak{q}}\in \mathrm{Ass}_{R_{\mathfrak{q}}}((I+J)/I)_{\mathfrak{q}}$. Therefore, we must have $J\subseteq I$, and the proof is complete.  
 \end{proof}

The following theorem  ensures that there exist infinitely many  monomial ideals  with the same strong persistence index.

\begin{theorem} \label{Th.SPP.Index}
Let $R=K[x_1, \ldots, x_n]$ with $n\geq 3$ be  a polynomial ring in $n$  variables  with coefficients in a  field $K$, and $\ell_{0}\geq 2$. 
Then there exist infinitely many  monomial ideals  in $R$  that  have  the  same strong persistence index $\ell_0$. 
\end{theorem}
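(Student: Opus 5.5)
We would take the ideals of Example \ref{Ex.Ass.PandN} with $m=\ell_0-1\geq 1$ and $r\geq 1$ arbitrary:
\[
I_r=(x^{m+3},y^{m+3},x^{m+2}y,xy^{m+2},x^{m+1}y^2z^r)\subset K[x,y,z].
\]
The plan is to show that $I_r$ has strong persistence index exactly $m+1=\ell_0$. Since the ideals $I_r$ are pairwise distinct, this gives infinitely many ideals in three variables. To pass to $n\geq 3$ variables, we apply the expansion with respect to the tuple $(1,1,n-2)$. By Lemma \ref{Lem.SPP.Index}(i) this preserves the strong persistence index, distinct ideals have distinct expansions, and $\mathrm{supp}(I_r^*)$ is all of $\{x_1,\ldots,x_n\}$.

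\textbf{Step 1: equality $(I^{\ell+1}:I)=I^\ell$ for $\ell\geq m+1$.} The inclusion $I^\ell\subseteq (I^{\ell+1}:I)$ is automatic. For the reverse inclusion we use Lemma \ref{Mat-Local} with $J=(I^{\ell+1}:I)$ and $I^\ell$ in place of $I$. By Example \ref{Ex.Ass.PandN}, $\mathrm{Ass}(R/I^\ell)=\{(x,y)\}$ for $\ell\geq m+1$. Hence it suffices to prove $J((x,y))\subseteq I^\ell((x,y))$.

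By Lemma \ref{LEM.Localization}(ii),(iv), this reduces to the statement $(L^{\ell+1}:L)=L^\ell$ in $K[x,y]$, where
\[
L=I((x,y))=(x^{m+3},y^{m+3},x^{m+2}y,xy^{m+2},x^{m+1}y^2).
\]
This is the step I expect to be the main obstacle. For $(x,y)$-primary ideals in two variables, such colon equalities can fail in general, since they amount to the powers of $L$ being Ratliff--Rush closed. I would try to verify them directly. Write $L=(x,y)^{m+3}\cap(\text{something simple})$, or better, describe the exponent set of $L^\ell$ explicitly as the lattice points above a staircase. Then check that if $u\cdot x^{m+3}$ and $u\cdot y^{m+3}$ both lie in $L^{\ell+1}$, the exponent of $u$ already lies in the staircase of $L^\ell$. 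In practice one only needs to test against the two pure powers $x^{m+3}$ and $y^{m+3}$.

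\textbf{Step 2: failure at $\ell=m$.} Here $\mathfrak m\in\mathrm{Ass}(R/I^m)$, and we exhibit an explicit witness. Following the element $h$ of Example \ref{Ex.Ass.PandN}, take a monomial of the form $g=x^{m+1}y^{\beta}z^{r-1}$ with $\beta$ chosen suitably, for instance $\beta=m(m+2)-1$ adjusted by one factor of $I$. We then check two things. First, $g\cdot u\in I^{m+1}$ for each of the five generators $u$, using divisibility certificates like those in items (1)--(3) of the example. Second, $g\notin I^m$, by the same integer-program inequalities (i)--(iv) as in the example: the condition $re\leq r-1$ forces $e=0$, and the $y$-degree count then yields a contradiction.

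Together, Steps 1 and 2 show that the strong persistence index of $I_r$ is exactly $m+1=\ell_0$. Finally, we invoke the expansion argument above to obtain the result in $n\geq 3$ variables.
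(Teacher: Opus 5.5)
Your route is the paper's route: the same ideal with $m=\ell_0-1$, the associated primes from Example \ref{Ex.Ass.PandN}, and Lemma \ref{Mat-Local} applied with $\mathrm{Ass}(R/I^\ell)=\{(x,y)\}$ for $\ell\ge m+1$. You then localize to $L=(x^{m+3},y^{m+3},x^{m+2}y,xy^{m+2},x^{m+1}y^2)\subset K[x,y]$, use a monomial witness for the failure at $\ell=m$, and expand to reach $n$ variables.

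\textbf{The gap.} You never prove the localized equality $(L^{\ell+1}:L)=L^\ell$ for $\ell\ge m+1$. You flag it as the main obstacle, note it can fail in general, and leave only a plan to check it. That equality is the entire upper bound: without it you have not shown the index is at most $m+1$. The paper proves it, for its localized ideal $J$, through a long case analysis establishing $(J^s:x_1^{m+3})\subseteq J^{s-1}$ for $s\ge m+2$.

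\textbf{How to close it.} One observation makes this short. All five generators of $L$ have degree $m+3$, and their $x$-exponents lie in $A=\{0,1,m+1,m+2,m+3\}$. Consider an $\ell$-fold sum from $A$ with exactly $i$ summands in $\{m+1,m+2,m+3\}$. Such sums fill the whole interval $[i(m+1),\,i(m+2)+\ell]$. The intervals for $i$ and $i+1$ leave no gap if and only if $i\ge m-\ell$. Hence for $\ell\ge m$ every exponent $0\le a\le \ell(m+3)$ occurs, so $L^\ell=(x,y)^{\ell(m+3)}$. It follows that $(L^{\ell+1}:L)\subseteq \bigl((x,y)^{(\ell+1)(m+3)}:x^{m+3}\bigr)=(x,y)^{\ell(m+3)}=L^\ell$ for all $\ell\ge m$. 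Testing against $x^{m+3}$ alone therefore suffices.

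\textbf{Step 2 needs tightening.} With $x$-exponent $m+1$, the value $\beta=m(m+2)-1$ is forced, with no adjustment. A smaller $\beta$ gives $g\,y^{m+3}$ an $(x,y)$-degree below $(m+1)(m+3)$, so it cannot lie in $I^{m+1}$. A larger $\beta$ gives $(xy^{m+2})^m\mid g$, so $g\in I^m$.

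The certificates you need show $g\cdot I\subseteq I^{m+1}$. They are not the memberships $xh,yh,zh\in I^s$ from items (1)--(3) of the example. For the fifth generator, use $x^{m+3}\,y^{m+3}\,(xy^{m+2})^{m-1}\mid g\,x^{m+1}y^2z^r$. The non-membership $g\notin I^m$ is then exactly the computation in Example \ref{Ex.Ass.PandN} with $s=m$.
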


\begin{proof}
 Our strategy for verifying this claim is to split the proof into two parts. 
First, we show that there exists some monomial $u \in (L^s : L) \setminus L^{s-1}$  for  $2 \le s \le m+1,$ and then we prove that
 $(L^s : L) = L^{s-1}$ for  $s \ge m+2$ via monomial localization.

Fix $r\geq 1$, and let  $L:= (x_1^{m+3}, x_2^{m+3}, x_1^{m+2}x_2, x_1x_2^{m+2}, x_1^{m+1}x_2^{2}x_3^r)$ in $S=K[x_1, x_2, x_3]$, where 
$m=\ell_0-1$.  It follows now from Example \ref{Ex.Ass.PandN}  that 
\[
\mathrm{Ass}_S(S/L^s) =
    \begin{dcases}
    \{(x_1,x_2), (x_1,x_2,x_3)\} & \text{if } 1\leq s\leq m\\
\{(x_1,x_2)\}  & \text{if } s\geq m+1. \\
                \end{dcases}
\]
Set $u:=x_1^{m+1}x_2^{(s-1)(m+1)+s-2}$. We claim that $u\in (L^s:_SL)\setminus L^{s-1}$ for all $2\leq s\leq m+1$. To show this, fix $2\leq s\leq m+1$.
 We now  consider the  following statements:
\begin{itemize}
\item[(1)]  $x_1^{m+3}u= (x_1^{m+2}x_2)^2 (x_2^{m+3})^{s-2} x_2^{m-s+1} \in L^s$. 
\item[(2)]  $x_2^{m+3}u= x_1^{m-s+1} (x_1x_2^{m+2})^{s} \in L^s$. 
\item[(3)]  $x_1^{m+2}x_2u=x_1^{m+3} (x_1x_2^{m+2})^{s-1}x_1^{m-s+1}\in L^s.$
\item[(4)]  $x_1x_2^{m+2}u=(x_1^{m+2}x_2)(x_2^{m+3})^{s-1}x_2^{m-s+1}\in L^s$.
\item[(5)]  $x_1^{m+1}x_2^{2}x_3^ru=x_1^{m+3}(x_1x_2^{m+2})^{s-1}x_1^{m-s}x_2x_3^r\in  L^s$.    
\end{itemize}
We thus have   $u\in (L^s:_SL)$. On the contrary, assume  that  $u\in L^{s-1}$. Then there exist nonnegative integers $a_i$, where $i=1, \ldots, 5$, 
 with $\sum_{i=1}^5a_i=s-1$ such that  $(x_1^{m+3})^{a_1} (x_2^{m+3})^{a_2}  (x_1^{m+2}x_2)^{a_3} (x_1x_2^{m+2})^{a_4} 
(x_1^{m+1}x_2^{2}x_3^r)^{a_5} \mid u$. Hence, we get the following statements: 
\begin{itemize}
\item[(i)]  $(m+3)a_1 + (m+2)a_3+ a_4 +(m+1)a_5  \leq m+1$. 
\item[(ii)] $(m+3)a_2 + a_3 + (m+2)a_4 + 2a_5 \leq (s-1)(m+1)+s-2$. 
\item[(iii)] $ra_5=0$. 
\end{itemize}
Since $r\geq 1$, from (iii), we obtain $a_5=0$. If $a_1>0$ or $a_3>0$, then we  get  $(m+3)a_1 + (m+2)a_3+ a_4 +(m+1)a_5 >m+1$, which contradicts (i). Hence, we must have  $a_1=0$ and $a_3=0$. Due to $\sum_{i=1}^5a_i=s-1$, this implies that  $a_2+a_4=s-1$.  Now, (ii) gives rise to  
$$(m+3)a_2  + (m+2)(s-1-a_2)   \leq (s-1)(m+1)+s-2.$$
The above inequality leads to $a_2\leq -1$, which is a contradiction. Hence, we can conclude that  $u\notin L^{s-1}$, as required.
 Consequently, $(L^s:_SL)\neq  L^{s-1}$ for all $2\leq s\leq m+1$.

 In what follows, we want to verify that  $(L^s:_SL)= L^{s-1}$    for all $s \geq m+2$. To accomplish this, fix $s\geq m+2$. 
  Since $L^{s-1} \subseteq (L^s:_SL)$,  it suffices to show that $(L^s:_SL) \subseteq L^{s-1}$. For this purpose, we rely on Lemma \ref{Mat-Local}. 
 On account of   $\mathrm{Ass}_S(S/L^s)=\{(x_1, x_2)\}$ (when $s\geq m+1$),  we only need  to verify that  
    $(L^s:_SL)(x_1,x_2) \subseteq L^{s-1}(x_1,x_2)$.  Put 
    $$J:=L(x_1,x_2)=(x_1^{m+3}, x_2^{m+3}, x_1^{m+2}x_2, x_1x_2^{m+2}, x_1^{m+1}x_2^{2}) \subset K[x_1,x_2].$$
    By parts (ii) and (iv) of Lemma \ref{LEM.Localization}, we should prove $(J^s:J) \subseteq J^{s-1}$.
     We claim     $(J^s: x_1^{m+3}) \subseteq  J^{s-1}$.   To simplify the notation, set  $f_1:=x_1^{m+3}$, $f_2:=x_2^{m+3}$, $f_3:=x_1^{m+2}x_2$, 
 $f_4:=x_1x_2^{m+2}$, and $f_5:=x_1^{m+1}x_2^{2}$.  Pick an arbitrary monomial $M$ in  $(J^s: f_1)$.  Then there exist nonnegative integers 
 $\alpha_1, \ldots, \alpha_5$ with  $\sum_{i=1}^5\alpha_i=s$, and some monomial $\ell=x_1^{r}x_2^{t}$ such that 
 $$Mf_1= \ell f_1^{\alpha_1} f_2^{\alpha_2} f_3^{\alpha_3} f_4^{\alpha_4} f_5^{\alpha_5}.$$ 
  If $\alpha_1>0$, then $M\in J^{s-1}$. Hence, from now on, we assume    $\alpha_1=0$.  If $\alpha_5 \geq m+3$, then $M\in J^{s-1}$ thanks to  $f_5^{\alpha_5}=f_1^{m+1}f_2^{2} f_5^{\alpha_5-m-3}$.   Accordingly, we can assume  $\alpha_5\leq m+2$. 
 Let $\alpha_3\geq 2$. Since  $f_3^{\alpha_3}=f_3^{\alpha_3-2}f_1 f_5,$ this implies that $M\in J^{s-1}$. 
 Thus, we can assume  $\alpha_3=0$ or $\alpha_3=1$. Let $\alpha_3=1$. Then we get the following equality 
  $$Mx_1^{m+3}= x_1^rx_2^t (x_2^{m+3})^{\alpha_2} (x_1^{m+2}x_2) (x_1x_2^{m+2})^{\alpha_4} (x_1^{m+1}x_2^{2})^{\alpha_5}.$$
 If $r\geq 1$ or  $\alpha_4>0$, then $M\in J^{s-1}$ due to $x_1f_3=x_2f_1$ and $f_3f_4=f_1f_2$.  So, we assume that $r=0$ and   $\alpha_4=0$. 
 Let  $\alpha_5=0$. This gives that 
 $Mx_1^{m+3}= x_2^t (x_2^{m+3})^{\alpha_2} (x_1^{m+2}x_2).$ Since  $\deg_{x_1}(Mx_1^{m+3})\geq m+3$ and 
 $\deg_{x_1}(x_2^t (x_2^{m+3})^{\alpha_2} (x_1^{m+2}x_2))=m+2$, this leads to a contradiction. Now, we let $\alpha_5=1$. Since $s\geq m+2$ and 
 $\alpha_2=s-2$, we get the following 
 $$M=x_2^t(x_2^{m+3})^{\alpha_2-m+1} (x_1x_2^{m+2})^{m}\in J^{s-1}.$$
 We thus let $\alpha_5\geq 2$. Then we have $Mf_1=x_2^tf_2^{\alpha_2} f_3f_5^{\alpha_5}$. This implies that 
  \begin{equation} \label{19}
   M=x^t_2(x_2^{m+3})^{\alpha_2} (x_1^{2m+1}x_2^5) (x_1^{m+1}x_2^2)^{\alpha_5-2}.
   \end{equation}
 Here, we need to consider the following subcases:
 
  \textbf{Case 1.}    $\alpha_2+1\geq m$.  It follows from (\ref{19}) and   $\alpha_2+\alpha_5=s-1$ that 
  $$M=x_2^t (x_2^{m+3})^{\alpha_2-m+1} (x_1x_2^{m+2})^m  (x_1^{m+1}x_2^2)^{\alpha_5-1}\in J^{s-1}.$$

\textbf{Case 2.}  $\alpha_2+1 < m$.  Then $\alpha_2\leq m-2$. Due to   $\alpha_2+\alpha_5=s-1$ and $s\geq m+2$, we have   $\alpha_5\geq 3$. 
 We can now deduce  from (\ref{19}),   $3\leq \alpha_5 \leq m+2$, and $\alpha_2+\alpha_5=s-1$ that 
    $$M= x_2^t(x_2^{m+3})^{\alpha_2+\alpha_5-m-1} (x_1x_2^{m+2})^{m+2-\alpha_5} 
       (x_1^{m+1}x_2^2) (x_1^{m+2} x_2)^{\alpha_5-2}\in J^{s-1}.$$ 
  Henceforth, we additionally  assume  $\alpha_3=0$. 
 Let  $\alpha_5 \geq 2$. If $\alpha_4\geq 2$,  then $M\in J^{s-1}$ because  $f_4^{2}f_5=f_1f_2^{2}$. Therefore, let   $\alpha_4 \leq 1$. 
  We first  suppose that   $\alpha_4=1$.     This implies the following  
 \begin{equation} \label{20}
   M=x_1^rx^t_2(x_2^{m+3})^{\alpha_2 +1} x_1^m x_2^3 (x_1^{m+1}x_2^2)^{\alpha_5-2}.
   \end{equation} 
  Now, we can observe  the following subcases:
   
   \textbf{Case 1.}    $\alpha_2+1\geq m$. From  (\ref{20})   and    $\alpha_2+\alpha_5=s-1$, we gain the following  
$$M=x_1^rx^t_2(x_1x_2^{m+2})^m  (x_2^{m+3})^{\alpha_2+2-m} (x_1^{m+1}x_2^2)^{\alpha_5-2} \in J^{s-1}.$$

\textbf{Case 2.}  $\alpha_2+1 < m$.   Then $\alpha_2\leq m-2$. As  $\alpha_2+\alpha_5=s-1$ and $s\geq m+2$, we get  $\alpha_5\geq 3$. 
 Because  $3\leq \alpha_5 \leq m+2$, one can rephrase  (\ref{20}) as follows:
      $$M= x_1^rx_2^t(x_2^{m+3})^{\alpha_2+\alpha_5-m-1} (x_1x_2^{m+2})^{m+3-\alpha_5} 
       (x_1^{m+2}x_2)^{\alpha_5-3} (x_1^{m+1}x_2^2)\in J^{s-1}.$$    
       Now, let $\alpha_4=0$.  Then we obtain  $Mf_1=x_1^rx^t_2 f_2^{\alpha_2}f_5^{\alpha_5}.$ This gives that     
    \begin{equation} \label{21}
   M=x_1^rx^t_2 (x_2^{m+3})^{\alpha_2} (x_1^{m-1} x_2^4) (x_1^{m+1}x_2^2)^{\alpha_5-2}.
     \end{equation}
     We can now  consider the following subcases:
   
\textbf{Case 1.}    $\alpha_2\geq m-1$. According to  (\ref{21}) and $\alpha_2+\alpha_5=s$, we derive that   
$$M=x_1^r x^t_2(x_1x_2^{m+2})^{m-1}  (x_2^{m+3})^{\alpha_2+2-m} (x_1^{m+1}x_2^2)^{\alpha_5-2} \in J^{s-1}.$$

\textbf{Case 2.}  $\alpha_2 < m-1$.  Then $\alpha_2\leq m-2$. Since $\alpha_2+\alpha_5=s$ and $s\geq m+2$, we get  $\alpha_5\geq 4$. 
 Due to $4\leq \alpha_5 \leq m+2$, we can rewrite (\ref{21}) as follows:
     $$M= x_1^rx_2^t(x_2^{m+3})^{\alpha_2+\alpha_5-m-2} (x_1x_2^{m+2})^{m+3-\alpha_5} 
        (x_1^{m+2}x_2)^{\alpha_5-4} (x_1^{m+1}x_2^2)^{2}\in J^{s-1}.$$
     Accordingly, we can let $\alpha_5=0$ or $\alpha_5=1$. Now, let $\alpha_5= 1$. If $\alpha_4 \geq 2$, then $M\in J^{s-1}$ due to $f_4^{2}f_5=f_1f_2^{2}$. Hence, we assume that $\alpha_4 \leq 1$. Let $\alpha_4=1$. If   $r\geq 1$, then $M\in J^{s-1}$ since $x_1f_4f_5=x_2f_1f_2$. Thus, let $r=0$. Because $\deg_{x_1}(Mf_1)\geq m+3$ and   $\deg_{x_1}(x_2^t f_2^{\alpha_2} f_4f_5)=m+2$, this gives a contradiction. Hence, 
  we let $\alpha_4=0$. If $r\geq 2$, then  $M\in J^{s-1}$   due to $x_1^2f_5=x_2^2f_1$. Thus, let $r\leq 1$. Thanks to  
  $\deg_{x_1}(Mf_1)\geq m+3$ and 
  $\deg_{x_1}(x_1^rx_2^t f_2^{\alpha_2}  f_5)=m+1+r\leq m+2$, this leads to  a contradiction. 
  Therefore, we assume that $\alpha_5=0$. If $\alpha_4 \geq m+4$, then $M\in J^{s-1}$ since $f_4^{\alpha_4}=f_1f_2^{m+2}f_4^{\alpha_4-m-3}$. 
  Thus,  let $\alpha_4\leq m+3$. Due to   $\alpha_1=\alpha_3=\alpha_5=0$, we obtain  $\alpha_2=s-\alpha_4$, and hence we get the following equalities 
     \begin{align*}
  \deg_{x_2}M=&\deg_{x_2}(Mx_1^{m+3})=\deg_{x_2}(\ell f_1^{\alpha_1} f_2^{\alpha_2} f_3^{\alpha_3} f_4^{\alpha_4} f_5^{\alpha_5})\\
 =& t+(m+3)\alpha_2+(m+2)\alpha_4\\
 =& t + (m+3)(s-1) +m+3-\alpha_4.
  \end{align*}
    Since $\alpha_4\leq m+3$, this yields that $(x_2^{m+3})^{s-1} \mid M$, and so $M\in J^{s-1}$. 

Hence, $(J^s: x_1^{m+3}) \subseteq  J^{s-1}$. As $J^{s-1} \subseteq (J^s:J) \subseteq (J^s: x_1^{m+3}) \subseteq  J^{s-1}$, we can conclude   
$(J^s:J)=J^{s-1}$.  From Lemma \ref{Mat-Local}, we get   $(L^s:_SL) \subseteq L^{s-1}$, and so $(L^s:_SL)= L^{s-1}$.
     Accordingly, we deduce that $L$ has the  strong persistence index $\ell_0=m+1$.
  Now, let $\mathfrak{p}_1:=(x_{i_1}, \ldots, x_{i_a})$, $\mathfrak{p}_2:=(x_{i_{a+1}}, \ldots, x_{i_b})$, and $\mathfrak{p}_3:=(x_{i_{b+1}}, \ldots, x_{i_c})$ with  $\mathrm{supp}(\mathfrak{p}_i) \cap  \mathrm{supp}(\mathfrak{p}_j)=\emptyset$ for all $1\leq i<j \leq 3$ and $\bigcup_{i=1}^{3}\mathrm{supp}(\mathfrak{p}_i)=\{x_1, \ldots, x_n\}$. 
 We can promptly deduce from Lemma  \ref{Lem.SPP.Index}(i) that  $L^*$ in $R=K[x_1, \ldots, x_n]$
 has the strong persistence index $\ell_0$. Put   $I:=(L^*)_W$ with $W(x_i)=\alpha_i$ such that $\alpha_i \geq 1$ for all $i=1, \ldots, n$. 
 It follows from  Lemma  \ref{Lem.SPP.Index}(ii) that  $I$  also   has the strong persistence index $\ell_0\geq 2$. Accordingly, we can conclude that   there exist
  infinitely many  such monomial ideals in $R$ that have the same strong persistence  index $\ell_0\geq 2$. This finishes the proof. 
  \end{proof}

%%%%%%%%%%%%%%%%%%%%%%%%%%%%%%%%%%%%%%%%%%%%%%%%%%%%%%%%%%
%%%%%%%%%%%%%%%%%%%%%%%%%%%%%%%%%%%%%%%%%%%%%%%%%%%%%%%%%%

\begin{remark} \label{Compare}
\em{
Let \( I \) be a monomial ideal in the polynomial ring 
\( R = K[x_1, \ldots, x_n] \). Suppose that \( n_0 \) denotes the 
index of stability of \( I \), and \( \ell_0 \) denotes the strong 
persistence index of \( I \). It is natural to ask whether these 
indices can be compared. In what follows, we present two examples 
showing that, in the first example, \( n_0 < \ell_0 \), while in the 
second example, \( n_0 > \ell_0 \). In other words, these two indices 
are not comparable in general.

\medskip
\textbf{First Example:} 
Consider the monomial ideal $I = (x_1^5 x_3,\; x_2^5,\; x_1 x_2^4,\; x_1^4 x_2)$ in the polynomial ring 
$R=K[x_1, x_2, x_3]$. We show that  the  index of stability of  $I$ is $2$, that is, $n_0=2$. It is routine to check that 
 $$I=(x_1, x_2^5) \cap (x_1^4, x_2^4) \cap (x_1^5, x_2) \cap (x_2, x_3).$$ 
This implies that $\mathrm{Ass}_R(R/I)=\mathrm{Min}(I)=\{(x_1, x_2),     (x_2, x_3)\}.$
In what follows, we establish  $\mathrm{Ass}_R(R/I^t)=\mathrm{Min}(I) \cup
\{(x_1, x_2, x_3)\}$ for all $t\geq 2$. Fix $t\geq 2$.  We claim $(I^t:_R f)=(x_1, x_2, x_3)$, where $f:=x_1^{4t-1} x_2^{t+2}$. To do this, we   consider the following statements:
\begin{itemize}
\item[(i)] By   $fx_1=x_1^{4t}x_2^{t+2}=(x_1^4x_2)^tx_2^2$ and $x_1^4x_2\in I$, this implies that  $fx_1\in I^t,$ and hence  $x_1 \in (I^t:_Rf)$;
\item[(ii)] As  $fx_2=x_1^{4t-1}x_2^{t+3}=(x_1^4x_2)^{t-1} (x_1^3x_2^4)$ and $x_1^3x_2^4\in I$, this gives that  $fx_2 \in I^t,$ and so   $x_2 \in (I^t:_Rf)$;
\item[(iii)] Since  $fx_3=x_1^{4t-1}x_2^{t+2}x_3 =(x_1^4x_2)^{t-2}(x_1^7x_2^4x_3)$ and $x_1^7x_2^4x_3\in I^2$, this yields that  $fx_3 \in I^t,$ 
and hence  $x_3 \in (I^t:_Rf)$.
\end{itemize} 
    Therefore,  $(x_1, x_2, x_3) \subseteq (I^t:_Rf).$ To finish our discussion,  we need to show that   $f\notin I^t$. On the contrary, assume that  $f \in I^t$. 
    This implies that there exist monomials $g_1, \ldots, g_t \in \mathcal{G}(I)$ such that   $x_3\nmid g_i$ for each $i=1, \ldots, t$, and  
    $g_1 \cdots g_t \mid   x_1^{4t-1} x_2^{t+2}$.   In particular, we must have the following equality
\begin{equation}
g_1 \cdots  g_t=(x_2^5)^{{\theta}_1} (x_1x_2^4)^{{\theta}_2} (x_1^4x_2)^{{\theta}_3}, 
\label{14}
\end{equation}
for some nonnegative integers $ {\theta}_1, \theta_2,  {\theta}_3$ with ${\theta}_1 +  \theta_2 +  {\theta}_3=t.$
 From  (\ref{14}), we get  
\begin{equation}
5\theta_1 + 4 \theta_2 + \theta_3 \leq t+2, \label{15}
\end{equation}
and 
\begin{equation}
\theta_2 + 4\theta_3  \leq  4t-1. \label{16}
\end{equation}
It follows from  $\sum_{i=1}^3 \theta_i =t$ and  (\ref{15}) that $t+4\theta_1+ 3\theta_2  \leq t+2$. This yields that  $4\theta_1+ 3\theta_2 \leq 2$. 
Hence, we must have $\theta_1= \theta_2=0$, and hence  $\theta_3 =t$. From  (\ref{16}), we obtain  $4t  \leq 4t-1$. This leads to a   contradiction. 
Consequently, we have $f \notin I^t$, and so $(I^t:_Rf) =(x_1, x_2, x_3)$. We thus get   $(x_1, x_2, x_3) \in 
\mathrm{Ass}_R(R/I^t)$ for all $t\geq 2$. Accordingly, we obtain $\mathrm{Ass}_R(R/I^t)=\mathrm{Min}(I) \cup
\{(x_1, x_2, x_3)\}$ for all $t\geq 2$. This implies that the  index of stability of  $I$ is $2$, that is, $n_0=2$. 

On the other hand, using Macaulay2 \cite{GS}, we obtain  $$(I^2:I)\neq I, \quad  (I^3:I)\neq I^2, \quad  (I^4:I)=I^3, \quad (I^5:I)=I^4.$$
Hence, we must have $\ell_0\geq 3$. This gives rise to $\ell_0 > n_0$. 
 \begin{verbatim}
i1 : S = QQ[x_1,x_2,x_3]

o1 = S

o1 : PolynomialRing

i2 : I = ideal(x_1^5*x_3, x_2^5, x_1*x_2^4, x_1^4*x_2)

o2 = ideal(x_1^5*x_3, x_2^5, x_1*x_2^4, x_1^4*x_2)

o2 : Ideal of S

i3 : associatedPrimes I

o3 = {ideal(x_1,x_2), ideal(x_2,x_3)}

o3 : List

i4 : associatedPrimes I^2

o4 = {ideal(x_1,x_2), ideal(x_2,x_3), ideal(x_1,x_2,x_3)}

o4 : List

i5 : I^2 : I == I

o5 = false

i6 : I^3 : I == I^2

o6 = false

i7 : I^4 : I == I^3

o7 = true

i8 : I^5 : I == I^4

o8 = true
\end{verbatim}

\medskip
\textbf{Second Example:}
 Assume that $C_{2n+1}=(V(C_{2n+1}), E(C_{2n+1}))$ is  a cycle graph on  vertex set $V(C_{2n+1})=\{x_1, \ldots, x_{2n+1}\}$ and edge set 
 $$E(C_{2n+1})=\{\{x_i, x_{i+1}\}~: ~ i=1, \ldots 2n\}\cup\{\{x_{2n+1}, x_1\}\}.$$
 Assume that \( R = K[x_1, \ldots, x_{2n+1}] \) is a polynomial ring, 
\( \mathfrak{m} = (x_1, \ldots, x_{2n+1}) \) is the maximal ideal of \(R\), 
and \( J(C_{2n+1}) \) denotes the cover ideal associated with \( C_{2n+1} \).
   It follows from \cite[Proposition 3.6]{NKA} that   $$\mathrm{Ass}_R(R/(J(C_{2n+1}))^s)= \mathrm{Ass}_R(R/J(C_{2n+1}))\cup \{\mathfrak{m}\},$$  for all $s\geq 2$. This yields that  the  index of stability of  $J(C_{2n+1})$ is $2$, that is, $n_0=2$. On the other hand, one can deduce from
    \cite[Theorem 3.3]{NKA} that $J(C_{2n+1})$ satisfies the strong persistence property. This means that $\ell_0=1$. We thus get $n_0>\ell_0$. 
    
   Accordingly,  these two indices  are not comparable in general.
}
\end{remark}

%%%%%%%%%%%%%%%%%%%%%%%%%%%%%%%%%%%%%%%%%%%%%%%%%%%%%%%%%%
%%%%%%%%%%%%%%%%%%%%%%%%%%%%%%%%%%%%%%%%%%%%%%%%%%%%%%%%%%

We conclude this section with the following results on the strong persistence index of ideals in commutative Noetherian rings.

\begin{proposition} \label{Pro.1}
Let $R$ be a commutative Noetherian ring and let $I$ be an ideal of $R$ whose strong persistence index is $\ell_0$. Then
\[
(I^r : I^s) = I^{r-s} \quad \text{for all } r - s \geq \ell_0.
\]
\end{proposition}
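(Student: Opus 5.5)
Note first that $I^{r-s}\subseteq (I^r:I^s)$ always holds. Indeed, if $x\in I^{r-s}$ then $xI^s\subseteq I^{r}$. So the content is the reverse inclusion. I would prove it by induction on $s\geq 1$, with $r$ ranging over all integers with $r-s\geq \ell_0$. I also use the convention $I^0=R$, though it is not really needed since $r-s\geq \ell_0\geq 1$.

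For the base case $s=1$, we have $r-1\geq \ell_0$. The definition of the strong persistence index, applied with $\ell=r-1\geq \ell_0$, gives $(I^{r}:I)=I^{r-1}$ directly.

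For the inductive step, assume $s\geq 2$ and that the claim holds for $s-1$ and for all admissible $r$. I would use the standard colon identity $(A:BC)=((A:B):C)$ together with $I^s=I\cdot I^{s-1}$. This gives
\[
(I^r:I^s)=\bigl((I^r:I):I^{s-1}\bigr).
\]
Since $r-1\geq r-s\geq \ell_0$, the definition (with $\ell=r-1$) yields $(I^r:I)=I^{r-1}$. Hence $(I^r:I^s)=(I^{r-1}:I^{s-1})$. Now $(r-1)-(s-1)=r-s\geq \ell_0$, so the induction hypothesis gives $(I^{r-1}:I^{s-1})=I^{r-s}$, which completes the induction.

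There is no real obstacle here. The only point to check is that at each peeling step the exponent $\ell$ to which the definition is applied satisfies $\ell\geq\ell_0$. This holds because the exponents decrease from $r-1$ and stop at $r-s\geq \ell_0$. The identity $(A:BC)=((A:B):C)$ is elementary: $x\cdot BC\subseteq A$ if and only if $(xC)B\subseteq A$, i.e.\ $xC\subseteq (A:B)$.
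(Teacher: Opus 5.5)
Your proposal is correct and follows essentially the same argument as the paper: peel off one factor of $I$ at a time using $(I^{r}:I)=I^{r-1}$, reducing $(I^r:I^s)$ to $(I^{r-s+1}:I)=I^{r-s}$. The paper does this elementwise and repeats the step $s-1$ times. You package the same step as the identity $(A:BC)=((A:B):C)$ inside a formal induction on $s$, and you also state the trivial inclusion $I^{r-s}\subseteq(I^r:I^s)$, which the paper leaves implicit.
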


\begin{proof}
Take  $m \in (I^r : I^s)$. Then $m I^s \subseteq I^r$. Take any $h \in I^{s-1}$ and $g \in I$. Since $hg \in I^s$, it follows that
\[
(mh)g = m(hg) \in I^r \quad \text{for all } g \in I.
\]
Thus $mh \in (I^r : I)$ for all $h \in I^{s-1}$. Since $r \geq \ell_0 + 1$, we have
$(I^r : I) = I^{r-1},$
and hence $mh \in I^{r-1}$ for all $h \in I^{s-1}$. This implies that
  $m \in (I^{r-1} : I^{s-1}).$
Repeating this argument inductively $s-1$ times, we obtain
\[
m \in (I^{r-(s-1)} : I^{s-(s-1)}) = (I^{r-s+1} : I).
\]
Since $r - s + 1 \geq \ell_0 + 1$, it follows that $(I^{r-s+1} : I) = I^{r-s}.$ 
Therefore, $m \in I^{r-s}$, and the proof is complete.
\end{proof}

%++++++++++++++++++++++++++++++++++++++++++++++++++++++++++++++++++++++++++
%++++++++++++++++++++++++++++++++++++++++++++++++++++++++++++++++++++++++++
\begin{proposition} \label{Pro.2}
Let $I$ be an ideal of a commutative Noetherian ring $R$. Then
\[
\ell_0(I^k) < \frac{\ell_0(I)}{k} + 1
\]
for all integers $k \geq 1$.
\end{proposition}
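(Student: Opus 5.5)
We need $\ell_0(I^k) < \ell_0(I)/k + 1$, i.e. $\ell_0(I^k) \le \lceil \ell_0(I)/k \rceil$. Since $\ell_0(I^k)$ is an integer, it suffices to show that $\ell_0(I^k)-1 < \ell_0(I)/k$. Equivalently, we must show that $((I^k)^{\ell+1} : I^k) = (I^k)^{\ell}$ holds for every integer $\ell$ with $k\ell \geq \ell_0(I)$. Once that is proved, the smallest index $\ell$ from which the equality holds for all larger values is at most $\lceil \ell_0(I)/k\rceil$. This gives $\ell_0(I^k) \leq \lceil \ell_0(I)/k\rceil < \ell_0(I)/k + 1$.

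The key tool is Proposition \ref{Pro.1}. It states that $(I^r : I^s) = I^{r-s}$ whenever $r - s \geq \ell_0(I)$. Take $r = k(\ell+1)$ and $s = k$. Then $r - s = k\ell$, and the condition $k\ell \geq \ell_0(I)$ gives
\[
((I^k)^{\ell+1} : I^k) = (I^{k(\ell+1)} : I^k) = I^{k\ell} = (I^k)^{\ell}.
\]
This is exactly the required equality for every $\ell \geq \lceil \ell_0(I)/k \rceil$. Hence $\ell_0(I^k) \leq \lceil \ell_0(I)/k \rceil$.

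To finish, note that for any positive integer $a$ we have $\lceil a/k\rceil < a/k + 1$. So $\ell_0(I^k) < \ell_0(I)/k + 1$. The case $k=1$ is consistent, since it reads $\ell_0(I) < \ell_0(I)+1$.

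The only subtle point is the meaning of $\ell_0$. It is defined as the smallest integer beyond which the equality holds for all larger indices, so it suffices to exhibit one threshold $\lceil \ell_0(I)/k\rceil$ that works. We must also allow the convention that $\ell_0 \geq 1$; if $\lceil \ell_0(I)/k\rceil$ happens to be $1$, the bound still holds. Beyond this, the argument is a direct specialization of Proposition \ref{Pro.1}, and I expect no real obstacle.
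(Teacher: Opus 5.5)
Your proof is correct and is essentially the paper's argument: both apply Proposition \ref{Pro.1} with $s=k$ and $r$ a multiple of $k$ to get $((I^k)^{\ell+1}:I^k)=(I^k)^{\ell}$ whenever $k\ell\geq \ell_0(I)$. The paper argues by contradiction, using that the equality must fail at $\ell_0(I^k)-1$, whereas you argue directly; your version has the small added benefit of showing that $\ell_0(I^k)$ exists and is at most $\lceil \ell_0(I)/k\rceil$.
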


\begin{proof}
Fix $k \geq 1$ and set $\ell_0' := \ell_0(I^k)$. By definition, we can deduce that 
$$((I^k)^{\ell_0'+1} : I^k)= (I^k)^{\ell_0'} \quad \text{and} \quad ((I^k)^{\ell_0'} : I^k) \neq (I^k)^{\ell_0'-1}.$$
Equivalently,
\[
(I^{k(\ell_0'+1)} : I^k)= I^{k\ell_0'}
\quad \text{and} \quad
(I^{k\ell_0'} : I^k) \neq I^{k(\ell_0'-1)}.
\]
Suppose, on the contrary,  that  $\ell_0(I^k) \geq \frac{\ell_0(I)}{k} + 1$. Then, this implies that 
 $k\ell_0' - k \geq \ell_0(I)$. Hence, from  Proposition \ref{Pro.1}, we get 
  $(I^{k\ell_0'} : I^k)= I^{k\ell_0' - k},$ which contradicts the minimality of $\ell_0'$. Hence, we must have 
  $k\ell_0' - k < \ell_0(I),$ and therefore $\ell_0' < \frac{\ell_0(I)}{k} + 1,$ as claimed. 
\end{proof}

%++++++++++++++++++++++++++++++++++++++++++++++++++++++++++++++++++++++++++
%++++++++++++++++++++++++++++++++++++++++++++++++++++++++++++++++++++++++++

\begin{proposition}
Let $I$ be an ideal of a commutative Noetherian ring $R$. Then
\[
\ell_0(I^k) \leq \ell_0(I)
\]
for all integers $k \geq 1$.
\end{proposition}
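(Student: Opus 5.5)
The plan is to deduce this directly from Proposition \ref{Pro.2}, treating $k=1$ and $k\geq 2$ separately. Throughout, $\ell_0(\cdot)$ is a positive integer by definition.

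For $k=1$ there is nothing to prove. For $k\geq 2$, Proposition \ref{Pro.2} gives
\[
\ell_0(I^k) < \frac{\ell_0(I)}{k}+1 .
\]
Since $\ell_0(I^k)$ is an integer, this strict inequality is equivalent to $\ell_0(I^k) \leq \lceil \ell_0(I)/k \rceil$. It then remains to check the elementary inequality $\frac{\ell_0(I)}{k}+1 \leq \ell_0(I)+1$, or more sharply $\lceil \ell_0(I)/k\rceil \leq \ell_0(I)$. This holds because $\ell_0(I)\geq 1$ and $k\geq 1$, so $\ell_0(I)/k \leq \ell_0(I)$, and the ceiling of a real number not exceeding the integer $\ell_0(I)$ is at most $\ell_0(I)$. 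Combining these gives $\ell_0(I^k)\leq \ell_0(I)$.

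If one prefers to avoid ceilings, argue as follows. From $\ell_0(I^k) < \ell_0(I)/k + 1 \leq \ell_0(I)+1$, both $\ell_0(I^k)$ and $\ell_0(I)$ are integers and $\ell_0(I^k)$ is strictly less than $\ell_0(I)+1$. Hence $\ell_0(I^k)\leq \ell_0(I)$.

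There is no real obstacle here. The only point requiring care is that the integrality of $\ell_0(I^k)$ is used to pass from the strict inequality in Proposition \ref{Pro.2} to a non-strict one. One should also note that Proposition \ref{Pro.2} itself relies on Proposition \ref{Pro.1} applied with $r=k\ell_0'$ and $s=k$, so no new colon-ideal computation is needed.
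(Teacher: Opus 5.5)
Your proof is correct and is essentially the paper's own argument. Both obtain $\ell_0(I^k) < \ell_0(I)/k + 1 \leq \ell_0(I)+1$ from Proposition~\ref{Pro.2} and then use the integrality of $\ell_0(I^k)$ to conclude; your ceiling version $\ell_0(I^k)\leq \lceil \ell_0(I)/k\rceil$ is also valid and slightly sharper.
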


\begin{proof}
 From Proposition \ref{Pro.2}, we have the following inequalities
\[
\ell_0(I^k) < \frac{\ell_0(I)}{k} + 1 \leq \ell_0(I) + 1.
\]
Since $\ell_0(I^k)$ is an integer, it follows that $\ell_0(I^k) \leq \ell_0(I)$, as desired. 
\end{proof}

%%%%%%%%%%%%%%%%%%%%%%%%%%%%%%%%%%%%%%%%%%%%%%%%%%%%%%%%%%%%
%%%%%%%%%%%%%%%%%%%%%%%%%%%%%%%%%%%%%%%%%%%%%%%%%%%%%%%%%%%%

\section{On the  phenomenon  of fluctuations  in colon powers of monomial ideals} \label{Main Section 2}
 
This section is devoted to exploring the second topic of our study.
 Suppose that $I$ is a monomial ideal in a polynomial ring $R = K[x_1, \ldots, x_n]$. 
Another natural line of study in this context  concerns the behavior of colon ideals of powers of $I$ as follows:  

\begin{definition} \label{Def. Fluctuation}
Let $I \subset R = K[x_1, \ldots, x_n]$ be a monomial ideal. We say that $I$ has the  phenomenon  of \textit{fluctuation in colon powers}
 if there exist  positive integers $a<b<c$ such that at least one of the following cases occurs:
\begin{itemize}
\item[(i)]  $(I^{a}:I) = I^{a-1}, \quad (I^{b}:I) \neq I^{b-1}, \quad \text{but} \quad (I^{c}:I) = I^{c-1}.$
\item[(ii)] $(I^{a}:I) \neq I^{a-1}, \quad (I^{b}:I) = I^{b-1}, \quad \text{but} \quad (I^{c}:I) \neq I^{c-1}.$
\end{itemize}
\end{definition}

%+++++++++++++++++++++++++++++++++++++++++++++++++++++++++++++++++++++++++++++++++++

The following example demonstrates  that the behavior described in Definition   \ref{Def. Fluctuation} may occur for certain monomial ideals.

\begin{example} \label{Exam.Fluctuation}
\em{
\begin{itemize}
\item[(i)] Let  the  monomial ideal $I=(x^6, y^6, x y^5, x^5 y, x^4 y^4 )$ in $R=K[x,y]$.  
Using  \textit{Macaulay2} \cite{GS}, we deduce that 
$$(I^2 : I)= I, \quad  (I^3 : I)\neq I^2, \quad  (I^4 : I)\neq I^3, \quad  (I^5 : I)= I^4.$$
This  means that $I$ has the  phenomenon  of  fluctuation in colon powers.
\begin{verbatim}
i1 : S = QQ[x,y]

o1 = S

o1 : PolynomialRing

i2 : I = monomialIdeal(x^6,y^6,x*y^5,x^5*y,x^4*y^4)

o2 = monomialIdeal(x6,x5y,x4y4,xy5,y6)

o2 : MonomialIdeal of S

i3 : I^2 : I == I

o3 = true

i4 : I^3 : I == I^2

o4 = false

i5 : I^4 : I == I^3

o5 = false

i6 : I^5 : I == I^4

o6 = true
\end{verbatim}

\item[(ii)] Consider  the monomial ideal $J=(x^7, y^7, x^2 y^5, x^5 y^2) \subset R=K[x,y]$. By means of  \textit{Macaulay2} \cite{GS},  we obtain that 
$$(J^2 : J) \neq J, \quad (J^3 : J)= J^2, \quad (J^4 : J) \neq J^3, \quad  (J^5 : J) = J^4.$$
 We can now immediately deduce that $J$  exhibits the phenomenon of fluctuation in colon powers.
\begin{verbatim}
i1 : S = QQ[x,y]

o1 = S

o1 : PolynomialRing

i2 : J = monomialIdeal(x^7, y^7, x^2*y^5, x^5*y^2)

o2 = monomialIdeal(x7,x5y2,x2y5,y7)

o2 : MonomialIdeal of S

i3 : J^2 : J == J

o3 = false

i4 : J^3 : J == J^2

o4 = true

i5 : J^4 : J == J^3

o5 = false

i6 : J^5 : J == J^4

o6 = true
\end{verbatim}
 
\item[(iii)]  Assume  the following monomial ideal in $R=K[x,y,z]$, 
\begin{align*}
L=(&x^{8}y^{5},\,x^{7}y^{9},\,x^{6}y^{10},\,x^{4}y^{11},\,x^{13}y^{3}z^{2},\,x^{8}y^{4}z^{2},\,x^{6}y^{9}z^{2},\\
& x^{13}y^{2}z^{3},\,x^{4}y^{9}z^{3},\,x^{4}y^{4}z^{5},\,y^{5}z^{7},\,x^{13}z^{8}).
\end{align*}
 Using  \textit{Macaulay2} \cite{GS}, we deduce that 
$$(L^2 : L) = L,\quad (L^3 : L) \neq L^2,  \quad  (L^4 : L) = L^3, \quad  (L^5: L) = L^4.$$
Thus, $L$ has the  phenomenon  of  fluctuation in colon powers.
\begin{verbatim}
i1 : S = QQ[x,y,z]

o1 = S

o1 : PolynomialRing

i2 : L = monomialIdeal(
        x^8*y^5, x^7*y^9, x^6*y^10, x^4*y^11,
        x^13*y^3*z^2, x^8*y^4*z^2, x^6*y^9*z^2,
        x^13*y^2*z^3, x^4*y^9*z^3, x^4*y^4*z^5,
        y^5*z^7, x^13*z^8
     )

o2 = monomialIdeal(x8y5,x7y9,x6y10,x4y11,x13y3z2,x8y4z2,x6y9z2,
                   x13y2z3,x4y9z3,x4y4z5,y5z7,x13z8)

o2 : MonomialIdeal of S

i3 : L^2 : L == L

o3 = true

i4 : L^3 : L == L^2

o4 = false

i5 : L^4 : L == L^3

o5 = true

i6 : L^5 : L == L^4

o6 = true
\end{verbatim}
\end{itemize}
}
\end{example}

%%%%%%%%%%%%%%%%%%%%%%%%%%%%%%%%%%%%%%%%%%%%%%%%%%%%
%%%%%%%%%%%%%%%%%%%%%%%%%%%%%%%%%%%%%%%%%%%%%%%%%%%%

It is natural to ask whether there exist infinitely many monomial ideals in $R=K[x_1, \ldots, x_n]$, where $n\geq 2$,  that exhibit the phenomenon of fluctuation in colon powers.
 We provide a positive answer to this question in Theorem \ref{Th.Fluctuation}. To achieve this,  we first establish the following auxiliary lemma.

\begin{lemma}\label{Lem.Fluctuation} 
Let  $I \subset R=K[x_1, \ldots, x_n]$ be a monomial ideal.  Then the following statements hold.
\begin{itemize}
\item[(i)] $I$ has the  phenomenon  of  fluctuation in colon powers   if and only if $I^*$ has the  phenomenon  of  fluctuation in colon powers,
  where $I^*$ denotes the  expansion of $I$.  
\item[(ii)] $I$ has the  phenomenon  of  fluctuation in colon powers  if and only if $I_W$ has the  phenomenon  of  fluctuation in colon powers, 
  where $I_W$ denotes the  weighted ideal.    
\end{itemize}
\end{lemma}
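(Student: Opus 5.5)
The plan is to prove a stronger pointwise statement: for every integer $k\geq 2$,
\[
(I^k:_R I)=I^{k-1} \iff ((I^*)^k:_{R^*} I^*)=(I^*)^{k-1} \iff ((I_W)^k:_R I_W)=(I_W)^{k-1}.
\]
Once this is established, the pattern of equalities and non-equalities along the sequence $k=2,3,\ldots$ is identical for $I$, $I^*$ and $I_W$. Hence any triple $a<b<c$ witnessing case (i) or case (ii) of Definition~\ref{Def. Fluctuation} for one ideal witnesses the same case for the other. Both directions of (i) and of (ii) then follow at once. (For $k=1$ the condition reads $(I:I)=R=I^0$, which holds trivially for all three ideals, so it causes no trouble.)

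For part (i), I will proceed exactly as in the proof of Lemma~\ref{Lem.SPP.Index}(i).

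First, for monomial ideals $F,H$ of $R$ we have $F=H$ if and only if $F^*=H^*$. Lemma~\ref{Lem.Bayati.Expansion}(i) gives this, since $f\in F^*$ if and only if $\pi(f)\in F$, and $\pi$ restricted to the variables $x_{j1}$ recovers $R$.

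Next, Lemma~\ref{Lem.Bayati.Expansion}(iii) and induction give $(I^k)^*=(I^*)^k$ for all $k$. Lemma~\ref{Lem.Bayati.Expansion}(v) then gives
\[
(I^k:_R I)^*=((I^*)^k:_{R^*}I^*).
\]
Therefore $(I^k:I)=I^{k-1}$ holds if and only if $((I^*)^k:I^*)=(I^{k-1})^*=(I^*)^{k-1}$, which is the pointwise statement for the expansion.

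For part (ii), I use the same template with Lemma~\ref{LEM. Weighted}. Parts (ii) and (iv) of that lemma give $(I^k)_W=(I_W)^k$ and $(I^k:I)_W=((I_W)^k:I_W)$.

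What remains is injectivity: $F=H$ if and only if $F_W=H_W$. The map $h$ sends monomials injectively to monomials and satisfies $u\mid v \iff h(u)\mid h(v)$. Consequently $h(\mathcal G(F))$ is the minimal generating set of $F_W$, and $F$ is recovered from $F_W$.

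This injectivity check is the only step with any content, and it is minor. One should argue that a monomial $h(v)$ lies in $F_W$ exactly when $v\in F$, using divisibility exponentwise with weights $w_i\geq 1$. Everything else is formal bookkeeping transferred from the earlier lemmas.
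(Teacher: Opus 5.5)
Your proposal is correct and is essentially the paper's proof. Both reduce the lemma to the pointwise equivalence $(I^k:I)=I^{k-1}\iff ((I^*)^k:I^*)=(I^*)^{k-1}\iff ((I_W)^k:I_W)=(I_W)^{k-1}$, using the injectivity of $F\mapsto F^*$ and $F\mapsto F_W$ on monomial ideals together with Lemma \ref{Lem.Bayati.Expansion}(iii),(v) and Lemma \ref{LEM. Weighted}(ii),(iv); you simply spell out what the paper leaves implicit, notably that the injectivity of weighting needs all $w_i\geq 1$.
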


\begin{proof}
(i) Recall that  for any two monomial ideals $A$ and $B$, it follows from Lemma \ref{Lem.Bayati.Expansion}(i)  that $A=B$ if and only if $A^*=B^*$. 
Now, the claim can be shown by parts (iii) and (v) of  Lemma \ref{Lem.Bayati.Expansion}.

(ii)   Remember that   for any two monomial ideals $A$ and  $B$, we always have $A=B$ if and only if $A_W=B_W$. Hence,  the assertion can be verified 
 by parts (ii) and (iv) of  Lemma \ref{LEM. Weighted}.
\end{proof}

%%%%%%%%%%%%%%%%%%%%%%%%%%%%%%%%%%%%%%%%%%%%%%%%%%%%%%%%%%%%
%%%%%%%%%%%%%%%%%%%%%%%%%%%%%%%%%%%%%%%%%%%%%%%%%%%%%%%%%%%%

We are now ready to state and prove the main result of this section in the following theorem. 

\begin{theorem} \label{Th.Fluctuation}
Let $R=K[x_1, \ldots, x_n]$ with $n\geq 2$ be  a polynomial ring in $n$  variables  with coefficients in a  field $K$. 
Then there exist infinitely many  monomial ideals  in $R$  that  have  the  phenomenon  of  fluctuation in colon powers. 
\end{theorem}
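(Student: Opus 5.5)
We will derive Theorem \ref{Th.Fluctuation} from a single seed ideal in two variables, transported by Lemma \ref{Lem.Fluctuation}, exactly as Theorem \ref{Th.SPP.Index} was derived from the ideal $L$. As the seed we take the ideal $I_0=(x_1^6, x_2^6, x_1x_2^5, x_1^5x_2, x_1^4x_2^4)\subset S=K[x_1,x_2]$ of Example \ref{Exam.Fluctuation}(i). The Macaulay2 computation gives $(I_0^2:I_0)=I_0$, $(I_0^3:I_0)\neq I_0^2$ and $(I_0^5:I_0)=I_0^4$. Hence case (i) of Definition \ref{Def. Fluctuation} holds with $a=2$, $b=3$, $c=5$. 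For a self-contained argument we would replace the computer check by hand verifications, in this order.

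\textbf{Nonequality at $b=3$.} Exhibit a monomial $u\in(I_0^3:I_0)\setminus I_0^2$. We would search among monomials $x_1^{p}x_2^{q}$ whose degree is just below $12$, such as $x_1^4x_2^{7}$ or $x_1^{7}x_2^{4}$ type shapes. For each of the five generators $g$, check that $gu\in I_0^3$. Then rule out $u\in I_0^2$ by the exponent-inequality argument used in the proof of Theorem \ref{Th.SPP.Index}.

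\textbf{Equalities at $a=2$ and $c=5$.} Show that $(I_0^s:I_0)\subseteq(I_0^s:x_1^6)\subseteq I_0^{s-1}$, as in the second half of the proof of Theorem \ref{Th.SPP.Index}. Take a monomial $M$ with $Mx_1^6=\ell\prod f_i^{\alpha_i}$ and $\sum\alpha_i=s$. Use binomial relations among the generators, such as $f_3f_4=f_1f_2$ and $x_1f_3=x_2f_1$, to reduce to finitely many exponent patterns. In each remaining pattern, rewrite $M$ explicitly as an element of $I_0^{s-1}$ or obtain a contradiction from the $x_1$-degree. If the case analysis fails for $x_1^6$ alone, colon by two generators simultaneously. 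Since $\sqrt{I_0}=(x_1,x_2)$, no localization is needed here.

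\textbf{Transport to $n$ variables.} Partition $\{x_1,\dots,x_n\}$ into two nonempty blocks, which is possible since $n\geq 2. $ Form the expansion $I_0^*$ along this partition, so that its support is all of $\{x_1,\dots,x_n\}$. Lemma \ref{Lem.Fluctuation}(i) shows that $I_0^*$ has fluctuation. Then, for every weight $W$ with arbitrary $W(x_i)\geq 1$, Lemma \ref{Lem.Fluctuation}(ii) shows that $(I_0^*)_W$ has fluctuation. Distinct weights give distinct ideals, because their minimal generators have different exponent vectors. This produces infinitely many examples.

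\textbf{Main obstacle.} The hard step is the equality $(I_0^5:I_0)=I_0^4$, and to a lesser extent $(I_0^2:I_0)=I_0$. These require the exhaustive case analysis of exponent vectors. The fallback is to cite the Macaulay2 computation of Example \ref{Exam.Fluctuation}(i) as the verification for the seed ideal. Only the transport step then needs proof, and it follows directly from Lemma \ref{Lem.Fluctuation}.
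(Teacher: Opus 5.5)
Your proposal is correct and is essentially the paper's proof. You take a fluctuating seed ideal in $K[x_1,x_2]$, expand it along a two-block partition of the variables, and then weight it, with Lemma~\ref{Lem.Fluctuation} carrying the property across each step; the paper seeds with Example~\ref{Exam.Fluctuation}(ii), while you seed with Example~\ref{Exam.Fluctuation}(i) using $a=2$, $b=3$, $c=5$, which is equally valid.

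One correction to your optional hand check. Since $I_0^3$ is generated in degree $\geq 18$, any witness for $(I_0^3:I_0)\neq I_0^2$ must have degree $\geq 12$, not just below $12$; for example, $x_1^4x_2^9$ works. Also, the step you call the main obstacle is immediate, because $I_0^k=(x_1,x_2)^{6k}$ for $k\geq 4$.
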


\begin{proof}
Let $J$ be a monomial ideal in $R =K[x,y]$ exhibiting the phenomenon of fluctuation in colon powers
 (see Example \ref{Exam.Fluctuation}(ii), which guarantees the existence of such a monomial  ideal). 
 Here, we consider  $$\mathfrak{p}_1:=(x_{i_1}, \ldots, x_{i_a}) \quad \text{and} \quad  \mathfrak{p}_2:=(x_{i_{a+1}}, \ldots, x_{i_b}),$$
  with  $\mathrm{supp}(\mathfrak{p}_1) \cap  \mathrm{supp}(\mathfrak{p}_2)=\emptyset$   and
  $\mathrm{supp}(\mathfrak{p}_1) \cup \mathrm{supp}(\mathfrak{p}_2) =\{x_1, \ldots, x_n\}$. 
 It follows at once from  Lemma  \ref{Lem.Fluctuation}(i) that  $J^*$ in $R=K[x_1, \ldots, x_n]$
 has the phenomenon  of  fluctuation in colon powers. Now, set   $I:=(J^*)_W$ with $W(x_i)=\alpha_i$ such that $\alpha_i \geq 1$ for all $i=1, \ldots, n$. 
 According to Lemma  \ref{Lem.Fluctuation}(ii), we can conclude that   $I$  has the phenomenon  of  fluctuation in colon powers as well. 
 This implies that    there exist infinitely many  monomial ideals  in     $R=K[x_1, \ldots, x_n]$    that  have  the  phenomenon  of
   fluctuation in colon powers, as claimed.  
\end{proof}

%++++++++++++++++++++++++++++++++++++++++++++++++++++++++++++++++++++++++++
%++++++++++++++++++++++++++++++++++++++++++++++++++++++++++++++++++++++++++

\section{Conclusion and Further Directions}

In this paper, we studied two related aspects of the behavior of powers of ideals in commutative Noetherian rings. 

First, we introduced and investigated the notion of the strong persistence index of an ideal. In particular, we showed that there exist infinitely many monomial ideals in $R=K[x_1,\ldots,x_n]$, with $n \geq 3$, sharing the same strong persistence index $\ell_0 \geq 2$. This demonstrates that the strong persistence index can exhibit rich and nontrivial behavior even within well-structured classes of monomial ideals.

Second, we examined the phenomenon of fluctuation in colon powers. We proved that there exist infinitely many monomial ideals in $K[x_1,\ldots,x_n]$, with $n \geq 2$, whose colon powers do not stabilize monotonically, but instead exhibit alternating behavior. This shows that colon stability properties can fail in a controlled and recurring manner.

These results suggest several directions for further research. One natural problem is to study the strong persistence index beyond the monomial setting, for example in binomial or more general classes of ideals. Another interesting direction is to classify ideals exhibiting fluctuation in colon powers and to understand how this behavior interacts with other homological or asymptotic invariants, such as depth functions, associated primes, or Rees algebra properties.

We hope that the methods developed here will contribute to a deeper understanding of asymptotic colon behavior in commutative algebra.

%++++++++++++++++++++++++++++++++++++++++++++++++++++++++++++++++++++++++++
%++++++++++++++++++++++++++++++++++++++++++++++++++++++++++++++++++++++++++

\bigskip
\noindent{\bf Acknowledgments} \par 
The authors are deeply grateful to the four anonymous referees for their careful reading of the manuscript and for their valuable suggestions and comments, which led to several improvements in the quality of this paper. In particular, the second author, Jonathan Toledo, is partially supported by SECIHTI-CBF-2025-I-1583.

%++++++++++++++++++++++++++++++++++++++++++++++++++++++++++++++++++++++++++
%++++++++++++++++++++++++++++++++++++++++++++++++++++++++++++++++++++++++++

 \bigskip
\noindent\textbf{Data availability statement}\par
This manuscript has no associated data.  
%++++++++++++++++++++++++++++++++++++++++++++++++++++++++++++++++++++++++++
%++++++++++++++++++++++++++++++++++++++++++++++++++++++++++++++++++++++++++

\bigskip
\noindent\textbf{Disclosure statement}\par 
The authors declare no financial or non-financial competing interests.  
%++++++++++++++++++++++++++++++++++++++++++++++++++++++++++++++++++++++++++
%++++++++++++++++++++++++++++++++++++++++++++++++++++++++++++++++++++++++++

%\bigskip
%\textbf{Funding}\par 
%The first author was partially supported by a grant from ABC (No.\ 1234567890).  
%++++++++++++++++++++++++++++++++++++++++++++++++++++++++++++++++++++++++++
%++++++++++++++++++++++++++++++++++++++++++++++++++++++++++++++++++++++++++

\bigskip
\noindent\textbf{ORCID}
\begin{itemize}
\item  {0000-0003-1073-1934:}~~~~{Mehrdad Nasernejad}
\item {0000-0003-3274-1367:}~~~~{Jonathan Toledo}
\end{itemize}

%%%%%%%%%%%%%%%%%%%%%%%%%%%%%%%%%%%%%%%%%%%%%%%%%%%%%%%%%%%
%%%%%%%%%%%%%%%%%%%%%%%%%%%%%%%%%%%%%%%%%%%%%%%%%%%%%%%%%%%
%%%%%%%%%%%%%%%%%%%%%%%%%%%%%%%%%%%%%%%%%%%%%%%%%%%%%%%%%%%
%%%%%%%%%%%%%%%%%%%%%%%%%%%%%%%%%%%%%%%%%%%%%%%%%%%%%%%%%%%

% ------------------------------------------------------------------------
\end{document}